\documentclass[11pt]{amsart}
\oddsidemargin=0in \evensidemargin=0in 
\textwidth=6.5in \textheight=8.5in

\usepackage{amsfonts, amssymb, amscd, latexsym, graphicx, psfrag, color, float, enumitem}
\usepackage[all]{xy}
\usepackage{mathrsfs}

\usepackage{tikz}

\usepackage[hyphens]{url}

\usepackage{todonotes}
\usepackage{hyperref}

\usepackage[hyphenbreaks]{breakurl}

\usepackage{tkz-euclide}
\usetkzobj{all} 
\usepackage{caption}


\newtheorem{dummy}{dummy}[section]
\newtheorem{lemma}[dummy]{Lemma}
\newtheorem{theorem}[dummy]{Theorem}

\newenvironment{customthm}[1]
{\innercustomthm}
  {\endinnercustomthm}

\newtheorem{corollary}[dummy]{Corollary}
\newtheorem{proposition}[dummy]{Proposition}
\theoremstyle{definition}
\newtheorem{definition}[dummy]{Definition}
\newtheorem*{definition*}{Definition}
\newtheorem{example}[dummy]{Example}

\newtheorem{remark}[dummy]{Remark}


\newcommand{\bN}{\mathbb{N}}
\newcommand{\bP}{\mathbb{P}}
\newcommand{\bQ}{\mathbb{Q}}

\newcommand{\bZ}{\mathbb{Z}}



\newcommand{\cA}{\mathcal{A}}
\newcommand{\cB}{\mathcal{B}}
\newcommand{\cC}{\mathcal{C}}
\newcommand{\cD}{\mathcal{D}}

\newcommand{\cL}{\mathcal{L}}

\newcommand{\cO}{\mathcal{O}}
\newcommand{\cP}{\mathcal{P}}

\newcommand{\cS}{\mathcal{S}}

\newcommand{\cU}{\mathcal{U}}





\newcommand{\Hom}{\mathrm{Hom}}

\newcommand{\Perf}{\mathrm{Perf}}

\newcommand{\dgCat}{\mathrm{dg}\mathcal{C}\mathrm{at}}



 \newcommand{\twocell}[1]{\ar@{}[#1]^(.30){}="a"^(.70){}="b" \ar@{=>} "a";"b"}
 \newcommand{\ocell}[1]{\ar@{}[#1]^(.30){}="a"^(.70){}="b" \ar@{=} "a";"b"}

\newcommand{\Qcoh}{\mathrm{Qcoh}}

\newcommand\radice[2][\relax]{\hspace{-1.5pt}\sqrt[\uproot{2}#1]{#2}}

\newcommand\Db[1]{\Perf(#1)}

\setcounter{tocdepth}{1}

\begin{document}

\author[Scherotzke]{Sarah Scherotzke}
\address{Sarah Scherotzke, 
Universit\'e du Luxembourg\\
Maison du Nombre\\
6, Avenue de la Fonte\\
L-4364 Esch-sur-Alzette\\ Luxembourg}
\email{\href{mailto:sarah.scherotzke@uni.lu}{sarah.scherotzke@uni.lu}}

\author[Sibilla]{Nicol\`o Sibilla}
\address{Nicol\`o Sibilla, SMSAS\\ 
University of Kent\\ 
Canterbury, Kent CT2 7NF\\UK and SISSA\\ Via Bonomea 265 \\34136 Trieste (TS) \\Italy}
\email{\href{mailto:N.Sibilla@kent.ac.uk}{N.Sibilla@kent.ac.uk}}

\author[Talpo]{Mattia Talpo}
\address{Mattia Talpo, Dipartimento di Matematica\\ Universit\`{a} di Pisa \\ Largo Bruno Pontecorvo 5 \\ 56127 Pisa (PI) \\ Italy}
\email{\href{mailto:mattia.talpo@unipi.it}{mattia.talpo@unipi.it}}

\title[Gluing semi-orthogonal decompositions]{Gluing semi-orthogonal decompositions}

\subjclass[2010]{14F05, 19E08}
\keywords{Semi-orthogonal decompositions, root stacks, logarithmic geometry, Kummer flat K-theory}

\begin{abstract} 
We introduce \emph{preordered semi-orthogonal decompositions} (psod-s) of dg-categories. We show that homotopy limits of dg-categories equipped with compatible 
psod-s  carry a natural psod. This gives a way to glue semi-orthogonal decompositions along faithfully flat covers, extending  the main result  
of \cite{bergh2017conservative}. As applications we will 
 construct semi-orthogonal decompositions for root stacks of log pairs $(X,D)$ where $D$ is a (\emph{not necessarily simple}) normal crossing divisor, generalizing results from \cite{ishii2011special}  and   \cite{bergh2016geometricity}.  
Further we will compute the Kummer flat K-theory of general log pairs $(X,D)$, generalizing earlier results of Hagihara and Nizio{\l} in the simple normal crossing case \cite{hagihara}, \cite{Ni1}. 
\end{abstract}

\maketitle


\section{Introduction} 
In this paper we study  conditions under which 
 semi-orthogonal decompositions (\emph{sod-s}) of dg-categories can be glued together to yield global semi-orthogonal decompositions. We formulate our results in terms of  general homotopy limits of dg-categories under appropriate compatibility assumptions on the structure functors. Our main technical  result  is, 
 roughly, that a limit of dg categories equipped with sod-s  and compatible functors between them carries a natural sod (Theorem \ref{psodmain1} in this introduction).

Making use of the homotopy theory of dg categories, the proof of Theorem \ref{psodmain1} is not difficult. However this result has several significant consequences. We will describe them briefly here, while referring the reader to the remainder of the introduction for a fuller summary of the contents of the paper.
\begin{enumerate}
\item As a consequence of Theorem \ref{psodmain1} we recover one of the main results of the interesting recent article \cite{bergh2017conservative}, namely what the authors call \emph{conservative descent}. The proof given in \cite{bergh2017conservative} is framed in the language of  classical triangulated categories, and depends on  rather 
 sophisticated arguments. Leveraging the formalism of $\infty$-categories, however, we can give a very simple proof of this  result. Indeed in section \ref{gpacd} we will show that conservative descent  follows immediately  from our Theorem \ref{psodmain1}. 


\item Root stacks of normal crossing divisors $D \subset X$  have been much studied in algebraic geometry.  
 In particular in \cite{ishii2011special} and \cite{ bergh2016geometricity} it is proved that their derived category  carries a natural  semi-orthogonal 
decomposition. 
These prior results 
assume $D$ to be \emph{simple normal crossing}.  
As an application of Theorem \ref{psodmain1} we drop this assumption. We construct semi-orthogonal decompositions on categories of perfect complexes of root stacks of general  normal crossing divisors $D \subset X$. 
A new feature emerges: whereas  in the simple normal crossing case the semi-orthogonal summands are given by categories of perfect complexes on the strata of $D$, without  the simplicity assumption the summands correspond to perfect complexes on the \emph{normalization} of the strata. 
\end{enumerate}
Although the previous two applications are  of general geometric  import, we were motivated by  log geometry and the theory of parabolic sheaves. Here are two applications of log geometric nature:
\begin{itemize}
\item[(3)] The \emph{infinite root stack}, introduced in \cite{TV}, is an important construction  in log geometry. Using  $(2)$ above, we construct sod-s for infinite root stacks  in the general normal crossing case. 
This improves on a  result from our previous paper \cite[Section 4]{SST2}, where we worked under restrictive assumptions on the ground field (and we used a highly non-trivial invariance of the derived category of infinite root stacks under log blow-ups).  
  By the results of \cite{TV}, we also obtain sod-s on derived categories of parabolic sheaves  of general normal crossing divisors (with rational weights). 
\item[(4)] Hagihara and Nizio{\l}  \cite{hagihara, Ni1} established  an important  structure theorem for Kummer flat K-theory of log schemes with divisorial log structure $(X, D)$, where $D$ is simple normal crossing. They proved that  Kummer flat K-theory splits as an infinite direct sum  labeled by the strata.  As a consequence of $(3)$  we extend their description of Kummer flat K-theory to log pairs $(X, D)$ where 
$D$ is general normal crossing. 
\end{itemize}
 
 \subsubsection*{\textbf{Preordered semi-orthogonal decompositions and gluing}}
 Dg-categories can be viewed as objects inside homotopically enriched categories. In technical terms, we say that dg-categories form a model category or an $\infty$-category. This yields meaningful notions of (homotopy) limits and colimits of dg categories.  This is a key difference with the classical theory of triangulated categories,  where limits and colimits are poorly behaved.  

 In algebraic geometry, descent properties of sheaves  can be encoded via homotopy limits of dg-categories. If $U \to X$ is faithfully flat cover, the category of perfect complexes $\Perf(X)$ can be computed as a limit of the cosimplicial diagram of dg-categories determined by the \v{C}ech nerve of $U \to X$. Limits and colimits of dg-categories arise also in other geometric contexts. For instance, the Fukaya category of exact symplectic manifolds  localizes, 
 and therefore can be calculated as a limit of  Fukaya categories of open patches. 

For this reason, it is important  to have structure theorems that allow us to deduce properties of the limit category from the behaviour of the dg-categories appearing as vertices of the limit. In this paper we prove a result of this type for \emph{semi-orthogonal decompositions} (sod-s).  These are categorified analogues of direct sum decompositions of abelian groups and have long played a key role in algebraic geometry, see \cite{kuznetsov2015semiorthogonal} for a survey of results.  In fact, it is more convenient to work with the slightly more sophisticated concept of \emph{preordered  semi-orthogonal decompositions} (psod-s), where the factors $\cC_w$ of a dg-category $\cC$ are labeled by elements of a preorder $(P, \leq)$. We use the notation $(\cC, P)$ to indicate a dg-category $\cC$ with with a psod indexed by $P$.

We introduce a notion of \emph{ordered} structure on an exact functor $F\colon (\cC_1, P_1) \to (\cC_2, P_2)$: this is the datum of an \emph{order-reflecting} map $\phi_F\colon P_2 \to P_1$ keeping track of the way the psod-s on $\cC_1$ and $\cC_2$ interact via $F$.  
 \begin{customthm}{A}[Theorem \ref{limitpsod2}]
\label{psodmain1}
Assume that for all $i \in I$, 
$\cC_i = (\cC_i, P_i)$ is equipped with a psod, and that for all morphisms  
$ \, 
f\colon i \to j  \, 
$  in  $I,$ 
$\alpha(f)\colon\cC_i \to \cC_j$ is  ordered. Assume additionally that the colimit of indexing preorders 
$
P = \varinjlim_{i \in I} P_i
$
is finite and directed. 
Then the limit category 
$
\cC = \varprojlim_{i \in I} \cC_i
$
carries a  psod with indexing preordered set $P$, 
$ \, 
\cC = \langle \cC_w, w \in P \rangle
\, $, such that for all $w \in P$  we have
$$
\cC_w \simeq \varprojlim_{i \in I} \bigoplus_{z \in \phi_i^{-1}(w)} \cC_{i, z},
 $$ 
 where $\phi_i\colon P_i\to P$ is the natural map.
\end{customthm}

The proof of Theorem \ref{psodmain1} is conceptually clear and  not difficult. It reduces to relatively straightforward manipulations in the $\infty$-category of dg categories. This simplicity is one of the main assets of our approach. As it is often the case, leveraging the power of $\infty$-categories allows for simpler and more conceptual arguments. As an example, we will show how  Theorem \ref{psodmain1} immediately implies an interesting recent result of Bergh and Schn\"urer from \cite{bergh2017conservative}. One of their main theorems is, roughly, a gluing result akin to Theorem \ref{psodmain1}, but limited to the geometric setting, which is called   \emph{conservative descent}. Their approach  is  interesting in itself, but requires rather sophisticated arguments based on the classical theory of triangulated categories. However from a dg point-of-view conservative descent admits a simple proof. Indeed in section  \ref{gpacd} we will show that it can be recovered  as a special case of Theorem \ref{psodmain1}. 

 Since the precise setting of conservative descent is somewhat  complicated, we prefer not to reproduce that result here but refer the reader directly to section \ref{gpacd} in the main text for more details.

  \subsubsection*{\textbf{Perfect complexes on root stacks}} 
Root stacks were first studied systematically by Cadman in \cite{cadman2007using}. 
They carry universal roots of line bundles equipped with a section, and in \cite{cadman2007using} were used to compactify moduli of stable maps. Root stacks have since found  applications  in many different areas of geometry including enumerative geometry, quantum groups \cite{sala2017hall}, the theory of N\'eron models \cite{chiodo2015n}, and more. 
From our perspective, root stacks are an essential tool to probe the geometry of log schemes.

Taking the root stack of a divisor is a fundamental geometric operation akin to blowing-up. In fact these two operations are often combined, as in the notion of \emph{stacky blow-up} proposed by Rydh \cite{rydh2011compactification}. 
From the view-point of the derived category classical blow-ups have a very simple description: the surgery replacing a smooth subscheme with the projectivization of its normal bundle becomes,  at the level of derived categories, the addition of  semi-orthogonal summands to the derived category of the ambient space. It is a natural question, with many important applications, whether this picture extends to stacky blow-ups. Semi-orthogonal decompositions associated to root stacks of normal crossing divisors were studied in \cite{ishii2011special}  and   \cite{bergh2016geometricity}. However these results require 
the normal crossing divisor to be \emph{simple}.

The  assumption of simplicity is artificial from the viewpoint of the underyling geometry. One of the chief goals of this paper is to lift the simplicity assumption and extend these semi-orthogonal decompositions to the general normal crossing case. The definition of the  root stack of a general normal crossing divisor requires some care, but the geometry is clear: the isotropy along the strata of the  divisor keeps track of their codimension.

We formulate a version of our result as Theorem \ref{main1root} below, but we refer the reader to the main text for a sharper and more general statement (see Corollary \ref{sodncver2}). Let $D \subset X$ be a normal crossing divisor. The divisor $D$ determines a  stratification of $X$ where the strata are intersections of local branches of $D$. 
Let $\cS$ be the set of strata and let $M$ be the top codimension of the strata. For every $0 \leq j \leq M$, let 
$\cS_j$ be the set of $j$-codimensional strata.  If $S$ is a stratum in $\cS$ we denote by $\widetilde{S}$ its \emph{normalization}. 
\begin{customthm}{B}[Corollary \ref{sodncver2}]
\label{main1root}
\mbox{}
The dg-category of perfect complexes 
of the $r$-th root stack 
$\radice[r]{(X,D)}$ admits a semi-orthogonal decomposition 
$$
\Perf(\radice[r]{(X,D)}) = \langle   \cA_M,   \ldots, \cA_0   \rangle 
$$
having the following properties:
\begin{enumerate}
\item $\cA_0 \simeq \Perf(X)$,
\item for every $1 \leq j \leq M$, $\cA_j $ decomposes as a direct sum $\cA_j  \simeq \bigoplus_{S \in \cS_j}\cB_S$, and
 \item for every $S \in \cS_j$ the category $\cB_S$ carries a semi-orthogonal decomposition composed of 
  $r\cdot j$ semi-orthogonal factors, which are all equivalent to $\Perf(\widetilde{S})$.
\end{enumerate}
\end{customthm}

\subsubsection*{\textbf{Applications to log geometry: infinite root stacks and Kummer flat K-theory}} 
Log schemes are an enhancement of ordinary schemes which was introduced in the 80's in the context of arithmetic geometry. In recent years log techniques have become  a mainstay of algebraic geometry and mirror symmetry: for instance, log geometry provides the language in which the Gross--Siebert program in mirror symmetry  is formulated \cite{gross2006mirror}.  
In \cite{TV} Talpo and Vistoli explain how to 
to associate to a log scheme its \emph{infinite root stack}, which is a projective limit of usual (i.e. finite-index) root stacks. This assignment gives rise to a faithful functor from log schemes to stacks: log information is converted into stacky information without any data loss. Additionally, the authors prove in \cite{TV}  that the Kummer flat topos of a log scheme $X$ is equivalent to the flat topos of its infinite root stack. 

Using Theorem \ref{main1root}, and passing to the limit on $r$, we obtain an infinite sod on $\Perf(\radice[\infty]{(X,D)})$. This result is stated as Theorem \ref{mainsgncdivinf} in the main text. The passage to the limit actually requires a careful construction of nested sod-s on categories of perfect complexes of root stacks, which was explained in our previous work \cite{SST2},  to which we refer the reader. It follows from results of \cite{TV} that $\Perf(\radice[\infty]{(X,D)})$ is equivalent to the category of parabolic sheaves with rational weights on $(X,D)$: thus our result can be read as the construction of a sod on 
$\mathrm{Par}^{\mathbb{Q}}(X,D)$.

\begin{customthm}{C}[Corollary \ref{ncmotdirectsum}]
\label{ncmotdirectsum11}
Let $(X, D)$ be a log stack given by an algebraic stack $X$ equipped with a normal crossing divisor 
$D.$ 
 Then the \emph{Kummer flat K-theory} spectrum of $X$ decomposes as a direct sum 
$$ 
K_{\mathrm{Kfl}}(X, D) \simeq K(X) \bigoplus \Big ( \bigoplus_{S \in S_D^*} \Big ( \bigoplus_{\chi \in (\mathbb{Q}/\mathbb{Z})_S^*} K(\widetilde S) \Big ) \Big ).
$$

\end{customthm}
We refer the reader to the main text for the definition of all the terms appearing in the formula. Our result extends to the general normal crossing case the structure theorem in Kummer flat K-theory due to Hagihara and Nizio{\l} \cite{hagihara}, \cite{Ni1}. A notable difference from those results is the appearance of the K-theory of the normalization of the strata, rather than of the strata themselves. In fact the statement of Corollary \ref{ncmotdirectsum} in the main text is considerably more general than Theorem \ref{ncmotdirectsum11}: it is not limited to K-theory but holds across all Kummer flat additive invariants.  

\subsection*{Acknowledgments:} We thank Marc Hoyois for a useful exchange about Lemma \ref{limitleft}, and the anonymous referee for useful comments and suggestions. M.T. was partially supported by EPSRC grant EP/R013349/1.  

\subsection*{Conventions}
We will work over an arbitrary ground commutative ring $\kappa$. We use the definition of algebraic stacks  given 
in \cite[Tag 026O]{stacks-project}. In the following all algebraic stacks will be of finite type. All functors between derived categories of sheaves or categories of perfect complexes are implicitly derived.

\section{Preliminaries}
\label{sec:outline}

\subsection{{Categories}} 
\label{sec:dg.categories}

We will work with \emph{dg-categories}, that is, $\kappa$-linear differential graded categories in the sense of  \cite{keller2007differential} and \cite{drinfeld2004dg}. 
If $\cC$ is a dg-category, and $A$ and $B$ are in $\cC,$ we denote by $\Hom_\cC(A, B)$ the Hom-complex between $A$ and $B$. We will be mostly interested in \emph{triangulated dg-categories} which are defined for instance in Section 3 of  \cite{bertrand2011lectures}. The homotopy category of a triangulated dg-category is a triangulated, Karoubi-complete category.

The category of dg-categories and exact functors   carries a model structure, which was studied in \cite{Tab} and \cite{toen2007homotopy},  where weak equivalences are Morita equivalences. A Morita equivalence is a dg-functor $F\colon A \to B$ such that the associated derived functor is an equivalence. 
Localizing this model category  at weak equivalences  yields an $(\infty,1)$-category,  which we denote $\dgCat.$ Our  reference for the theory of $(\infty,1)$-categories is  given by Lurie's work \cite{lurie2009higher, Lu2}. In the rest of the paper we will refer to $(\infty,1)$-categories  simply as $\infty$-categories.   The category $\dgCat$ has a symmetric monoidal structure given by the tensor product of dg-categories,  see \cite{toen2007homotopy}.

We will be interested in taking limits and colimits of diagrams $\alpha\colon I \to \dgCat$, where $I$ is an ordinary category and 
$i$ is a \emph{pseudo-functor}, in the sense for instance of Definition 4.1 of \cite{block2017explicit}. 
All limits and colimits  
are to be understood as  \emph{homotopy} limits and colimits for the Morita model structure: equivalently, they 
are (co)limits in the $(\infty,1)$-categorical sense. Since every  pseudo-functor  $\alpha\colon I \to \dgCat$ can be strictified up to Morita equivalence, and this  does not affect  homotopy (co)limits, the reader can assume that all diagrams $\alpha\colon I \to \dgCat$ in the paper are strict.

Throughout the paper, we will say that a square   of dg-categories 
is \emph{commutative} if there 
is an invertible natural transformation $\alpha\colon KG \Rightarrow HF$  
\begin{equation}
\label{sqdiacomm}
\begin{gathered}
\xymatrix{
\cC_1 \ar[r]^-{F} \ar[d]_-{G} \ar[d] & \cC_2 \ar[d]^-{H} \\
\cC_3 \ar[r]^-{K} 
\twocell{ur}^{\alpha} & \cC_4.}
\end{gathered}
\end{equation}
Whenever only the existence of a natural transformation  making the diagram commute will be needed, and not its explicit definition, we will 
 omit $\alpha$ and the 2-cell notation  from the diagram. We will say that (\ref{sqdiacomm}) \emph{commutes strictly} if $\alpha$ is the indentity natural transformation, and we will sometimes denote this as
 \begin{equation*}
\begin{gathered}
\xymatrix{
\cC_1 \ar[r]^-{F} \ar[d]_-{G} \ar[d] & \cC_2 \ar[d]^-{H} \\
\cC_3 \ar[r]^-{K} 
\ocell{ur} 
& \cC_4.}
\end{gathered}
\end{equation*}
 Let $I$ be a small category. 
Let 
$
\gamma_1, \gamma_2\colon I \to \dgCat
$
be diagrams in $ \dgCat$. For all $i \in I$ set $\cD_i:=\gamma_1(i)$ and $\cC_i:=\gamma_2(i)$.  Let
$$
T\colon \gamma_1 \Rightarrow \gamma_2 \colon I \longrightarrow \dgCat
$$
be a pseudo-natural transformation, given by the following data:
\begin{itemize}[leftmargin=*]
\item for all $i \in I$, a functor $T(i)\colon \cD_i \to \cC_i$,
\item for all $i, j \in I$, and for all maps $i \xrightarrow{a_{i,j}} j$, an invertible natural transformation $\alpha_{i,j}$
\[
\begin{gathered}
\label{diag2}
\xymatrix{
\cD_i \ar[rr]^-{T(i)} \ar[d]_-{\gamma_1(a_{i,j})} \ar[d] && \cC_i \ar[d]^-{\gamma_2(a_{i,j})} \\
\cD_j \ar[rr]^-{T(j)} 
\twocell{urr}^{\alpha_{i,j}} && \cC_j.}
\end{gathered}
\]
\end{itemize}
Denote by $\cD$ and $\cC$ the limits $\varprojlim_{i \in I} \cD_i$ and 
$\varprojlim_{i \in I} \cC_i$ respectively, and let $T$ be the limit of the functors $T(i)$
$$
T= \varprojlim_{i \in I} T(i)\colon \cD \longrightarrow \cC.
$$

\begin{lemma}
\label{limitleft}
Assume the following.
\begin{enumerate}[leftmargin=*]
\item For all $i \in I$, $T(i)$ admits a left   adjoint $T(i)^L$, $T(i)^L \dashv T(i)$.
\item The following \emph{Beck--Chevalley} condition is satisfied: 
for all $i, j \in I$, and for all maps $i \xrightarrow{a_{i,j}} j$,  the canonical  natural transformation  
$$ \alpha_{i,j}^L\colon T(j)^L \circ    \gamma_2(\alpha_{i,j})  \Rightarrow 
  \gamma_1(\alpha_{i,j})  \circ T(i)^L  
$$
induced by $\alpha_{i,j}$  is invertible. 
\end{enumerate}
Then $T^L  = \varprojlim_{i \in I} T(i)^L$ is  the left adjoint of $T$. 
\end{lemma}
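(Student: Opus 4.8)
The plan is to promote the levelwise left adjoints to a pseudo-natural transformation $\gamma_2 \Rightarrow \gamma_1$, take its limit, and then verify the adjunction $T^L \dashv T$ by comparing mapping complexes one vertex at a time.

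First I would note that hypothesis (2) is precisely the statement that, for every $a_{i,j}\colon i\to j$, the mate
$$
\alpha_{i,j}^L\colon\ T(j)^L\circ\gamma_2(a_{i,j})\ \Longrightarrow\ \gamma_1(a_{i,j})\circ T(i)^L
$$
of the structure 2-cell $\alpha_{i,j}$ is invertible. Since each $\alpha_{i,j}^L$ is assembled from $\alpha_{i,j}$ together with the units and counits of the adjunctions $T(i)^L\dashv T(i)$, the functoriality of the mates (passage-to-adjoints) construction shows that the family $\{T(i)^L\}_{i\in I}$ equipped with the 2-cells $\{(\alpha_{i,j}^L)^{-1}\}$ inherits the coherence (cocycle) conditions of a pseudo-natural transformation $\gamma_2\Rightarrow\gamma_1$ from those satisfied by $\{T(i)\}$ and $\{\alpha_{i,j}\}$. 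One may therefore form
$$
T^L\ :=\ \varprojlim_{i\in I}T(i)^L\ \colon\ \cC\longrightarrow\cD ,
$$
and, by the very meaning of the limit of a pseudo-natural transformation, $\mathrm{pr}^{\cD}_i\circ T^L\simeq T(i)^L\circ\mathrm{pr}^{\cC}_i$ for every $i$, where $\mathrm{pr}^{\cC}_i\colon\cC\to\cC_i$ and $\mathrm{pr}^{\cD}_i\colon\cD\to\cD_i$ are the structure functors of the two limits.

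Next I would use that mapping complexes in a homotopy limit of dg-categories are computed as the homotopy limits of the mapping complexes: for $x,y\in\cC$, with $x_i:=\mathrm{pr}^{\cC}_i(x)$ and $y_i:=\mathrm{pr}^{\cC}_i(y)$, one has $\Hom_\cC(x,y)\simeq\varprojlim_{i\in I}\Hom_{\cC_i}(x_i,y_i)$, and similarly for $\cD$. Hence for $c\in\cC$ and $d\in\cD$, with images $c_i$ and $d_i$,
$$
\Hom_\cD\!\big(T^L c,\,d\big)\ \simeq\ \varprojlim_{i\in I}\Hom_{\cD_i}\!\big(T(i)^L c_i,\,d_i\big)\ \simeq\ \varprojlim_{i\in I}\Hom_{\cC_i}\!\big(c_i,\,T(i)\,d_i\big)\ \simeq\ \Hom_\cC\!\big(c,\,T d\big),
$$
where the outer equivalences come from the previous paragraph together with the levelwise description of mapping complexes, and the middle one is induced at each $i$ by the adjunction $T(i)^L\dashv T(i)$. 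The Beck--Chevalley condition enters a second time precisely here: it is what guarantees that the levelwise adjunction equivalences are compatible with the transition maps, i.e.\ that they constitute an equivalence between the $I$-diagrams $i\mapsto\Hom_{\cD_i}(T(i)^L c_i,d_i)$ and $i\mapsto\Hom_{\cC_i}(c_i,T(i)d_i)$ in $\Ch(\kappa)$, so that one may pass to homotopy limits. Naturality in $c$ and $d$ is inherited levelwise, and the resulting natural equivalence of mapping complexes exhibits $T^L$ as left adjoint to $T$.

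The hard part is entirely one of homotopy coherence: turning this levelwise data into a genuine $\infty$-categorical adjunction, with coherent unit and counit rather than mere isomorphisms of $\Hom$-groups in the homotopy category, and the Beck--Chevalley hypothesis is exactly what makes this possible. I expect the cleanest way to organize the bookkeeping is to pass through straightening/unstraightening: present $\gamma_1,\gamma_2$ as (co)Cartesian fibrations over $I$ and $T$ as a morphism of fibrations over $I$, observe that the existence of levelwise left adjoints together with the Beck--Chevalley condition are precisely the hypotheses producing a relative left adjoint $T^L$ over $I$, and recover $\cC=\varprojlim_i\cC_i$ as the $\infty$-category of Cartesian sections, on which a relative adjunction restricts to an honest one $T^L\dashv T$. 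Alternatively, once $T^L$ and candidate unit and counit have been constructed, the triangle identities can be checked after composing with each projection $\mathrm{pr}_i$, where they reduce to the triangle identities for $T(i)^L\dashv T(i)$.
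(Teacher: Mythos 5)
Your proposal is correct and, in its rigorous form, is essentially the paper's own argument: the paper also passes through the Grothendieck construction, presents $\gamma_1,\gamma_2$ as Cartesian fibrations over $I^{\mathrm{op}}$, uses assumption (1) to produce a relative left adjoint and assumption (2) to ensure it preserves Cartesian edges, and then restricts the induced adjunction on sections to Cartesian sections (citing Lemmas D.3 and D.6 of Bachmann--Hoyois). Your preliminary mates-and-Hom-complex comparison is a reasonable heuristic whose coherence gap you correctly flag, and your proposed fix is exactly the route the paper takes.
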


\begin{remark}\label{rmk:nat.trans}
Recall that, by definition, $ \alpha^L_{i,j}$ is  given by the composite 
$$
 T(j)^L \circ    \gamma_2(\alpha_{i,j})  \stackrel{(a)}\Rightarrow 
 T(j)^L \circ    \gamma_2(\alpha_{i,j}) \circ T(i) \circ T(i)^L 
 \stackrel{(b)}\Rightarrow
  T(j)^L \circ    T(j) \circ  \gamma_1(\alpha_{i,j}) \circ T(i)^L \stackrel{(c)}\Rightarrow
  \gamma_1(\alpha_{i,j})  \circ T(i)^L
$$
where $(a)$ and $(c)$ are given by the counit of $T(i)^L \dashv T(i)$ and the unit of  $T(j)^L \dashv T(j)$, while $(b)$ is given by $\alpha_{i,j}$. 
\end{remark}
 
\begin{proof}[Proof of Lemma \ref{limitleft}] 
This is a well-known fact in the general 
setting of $\infty$-categories. 
We give  a  proof based on \cite[Appendix D]{bachmann2017norms}.  With small abuse of notation we keep denoting by $I$ also the nerve $\infty$-category of $I$. Via the Grothendieck construction we can write $\gamma_1$ and $\gamma_2$ as Cartesian fibrations over $I^{\mathrm{op}}$. Then  $T$ yields a morphism of cartesian fibrations over $I^{\mathrm{op}}$
$$
\xymatrix{
\int_I \gamma_1 \ar[rd] \ar[rr]^T && \int_I \gamma_2 \ar[ld] \\
& I^{\mathrm{op}} &
}
$$ 
By assumption (1), 
$T$ has a relative left adjoint: this follows from Lemma D.3 
of \cite{bachmann2017norms}. Now by Lemma D.6 of \cite{bachmann2017norms}  relative adjunctions induce adjunctions between $\infty$-categories of sections. Further, if the relative left adjoint preserves cartesian edges (which is the case by assumption (2)
), this restricts to an adjunction between the full subcategories of cartesian sections: this gives the desired adjunction 
$T^L \dashv T$ and concludes the proof. 
\end{proof}
We also state the analogue of Lemma \ref{limitleft} for right adjoints. 
\begin{lemma}
\label{limitright}
Assume the following.
\begin{enumerate}[leftmargin=*]
\item For all $i \in I$, $T(i)$ admits a right adjoint $T(i)^R$, $T(i) \dashv T(i)^R$.
\item The following \emph{Beck--Chevalley} condition is satisfied: 
for all $i, j \in I$, and for all maps $i \xrightarrow{a_{i,j}} j$,  
the canonical  natural transformation  
$$\alpha^R_{i,j}\colon  \gamma_1(\alpha_{i,j})  \circ T(i)^R \Rightarrow 
  T(j)^R \circ    \gamma_2(\alpha_{i,j}) $$
induced by $\alpha_{i,j}$  is invertible. 
\end{enumerate}
Then $T^R = \varprojlim_{i \in I} T(i)^R$ is  the right adjoint of $T$. 
\end{lemma}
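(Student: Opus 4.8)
The plan is to reduce Lemma \ref{limitright} to Lemma \ref{limitleft} by passing to opposite categories, since the right adjoint of a functor is the opposite of the left adjoint of its opposite functor. Concretely, I would consider the diagrams $\gamma_1^{\op}, \gamma_2^{\op}\colon I^{\op} \to \dgCat$ obtained by postcomposing with the (auto)equivalence $\dgCat \to \dgCat$, $\cC \mapsto \cC^{\op}$, and noting that this equivalence sends limits of dg-categories to limits; thus $\cD^{\op} \simeq \varprojlim_{i} \cD_i^{\op}$ and $\cC^{\op} \simeq \varprojlim_i \cC_i^{\op}$. The pseudo-natural transformation $T$ induces a pseudo-natural transformation $T^{\op}\colon \gamma_2^{\op} \Rightarrow \gamma_1^{\op}$ over $I^{\op}$ (note the reversal of direction), with structure 2-cells the ``opposites'' of the $\alpha_{i,j}$. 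The key point is the standard duality: $T(i)$ admits a right adjoint $T(i)^R$ if and only if $T(i)^{\op}$ admits a left adjoint, namely $(T(i)^R)^{\op}$, and the counit/unit of one adjunction become the unit/counit of the other.

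Next I would check that the Beck--Chevalley condition for right adjoints in hypothesis (2) of Lemma \ref{limitright} translates precisely into the Beck--Chevalley condition for left adjoints in hypothesis (2) of Lemma \ref{limitleft}, applied to the data $(\gamma_2^{\op}, \gamma_1^{\op}, T^{\op})$. This is essentially formal once one unwinds Remark \ref{rmk:nat.trans}: the canonical natural transformation $\alpha^R_{i,j}$ is built from the same units and counits as $\alpha^L_{i,j}$ but read in the opposite category, so invertibility of one is equivalent to invertibility of the other. Granting this, Lemma \ref{limitleft} applied to $T^{\op}$ yields that $\varprojlim_i (T(i)^R)^{\op} = \varprojlim_i \big( (T(i)^{\op})^L \big)$ is the left adjoint of $T^{\op} = \varprojlim_i T(i)^{\op}$. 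Passing back through the opposite-category equivalence (and using that $\varprojlim_i (T(i)^R)^{\op} \simeq (\varprojlim_i T(i)^R)^{\op}$ because the opposite functor commutes with limits), we conclude that $\varprojlim_i T(i)^R$ is the right adjoint of $T$, as desired.

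Alternatively, one can give a direct proof mirroring the argument for Lemma \ref{limitleft}: realize $\gamma_1, \gamma_2$ as \emph{coCartesian} fibrations over $I$ via the Grothendieck construction, observe that hypothesis (1) gives $T$ a relative right adjoint by the dual of Lemma D.3 of \cite{bachmann2017norms}, that relative adjunctions induce adjunctions on $\infty$-categories of sections by Lemma D.6, and that hypothesis (2) guarantees the relative right adjoint preserves coCartesian edges, so the adjunction restricts to coCartesian sections and yields $T \dashv T^R$ with $T^R = \varprojlim_i T(i)^R$. Either route works; I would favor the opposite-category reduction since it is the shortest and relegates all the bookkeeping to the already-proved Lemma \ref{limitleft}.

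The main obstacle is the careful bookkeeping in the second step: one must verify that the ``opposite'' of the canonical 2-cell $\alpha^R_{i,j}$ really is the canonical 2-cell $\alpha^L_{i,j}$ for the opposite data, i.e. that the duality interchanges the two Beck--Chevalley conditions on the nose rather than up to some correction. This is a diagram-chase through the definition in Remark \ref{rmk:nat.trans} — conceptually routine, but the kind of thing where signs and directions of 2-cells are easy to get wrong, especially since pseudo-naturality means the coherence 2-cells $\alpha_{i,j}$ themselves get dualized. Once that identification is pinned down, the rest is immediate from Lemma \ref{limitleft}.
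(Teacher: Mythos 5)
The paper never writes out a proof of Lemma \ref{limitright}: it is simply stated as the analogue of Lemma \ref{limitleft}, the intended argument being the dual of the one given there. Your second, ``direct'' route is exactly that dual argument (coCartesian fibrations over $I$ via the Grothendieck construction, fiberwise right adjoints producing a relative right adjoint by the dual of Lemma D.3 of \cite{bachmann2017norms}, Lemma D.6 to get an adjunction on sections, and hypothesis (2) ensuring the relative right adjoint preserves coCartesian edges so the adjunction restricts to coCartesian sections, which compute the limit), so it coincides with what the paper has in mind. Your preferred route, reducing to Lemma \ref{limitleft} via $(-)^{\op}$, is also legitimate: $(-)^{\op}$ preserves Morita equivalences, hence descends to an involutive autoequivalence of $\dgCat$ and commutes with limits, $T(i)\dashv T(i)^R$ holds iff $(T(i)^R)^{\op}\dashv T(i)^{\op}$, and the opposite of $\alpha^R_{i,j}$ agrees with the canonical left Beck--Chevalley cell of the opposite data up to the (invertible) coherence cells, so invertibility transfers. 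One bookkeeping correction, which is exactly the kind of slip you flagged: passing to opposites reverses 2-cells but not 1-cells, so $\gamma_1^{\op},\gamma_2^{\op}$ are still diagrams indexed by $I$ (not $I^{\op}$), and $T^{\op}$ is still a pseudo-natural transformation $\gamma_1^{\op}\Rightarrow\gamma_2^{\op}$ whose components $T(i)^{\op}$ admit \emph{left} adjoints $(T(i)^R)^{\op}$; there is no reversal of the direction of $T$, only of the coherence cells $\alpha_{i,j}$. With that fixed, Lemma \ref{limitleft} applied to the opposite data gives $(\varprojlim_i T(i)^R)^{\op}\dashv T^{\op}$, i.e. $T\dashv \varprojlim_i T(i)^R$, which is the assertion of the lemma; so the proposal is correct once the direction/indexing slip is repaired.
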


\subsubsection{Categories of sheaves}
If $X$ is an algebraic  stack, we denote by  $\Qcoh(X)$ the 
 triangulated dg-category of quasi-coherent sheaves on $X$.  
The tensor product of quasi-coherent sheaves equips  $\Qcoh(X)$ with a symmetric monoidal structure, and $\Db{X}$, the dg-category of perfect complexes, is defined as the full subcategory of dualizable objects (see \cite{BFN}). By \cite[Theorem 1.3.4]{G},  the dg-category of quasi-coherent sheaves $\Qcoh(-)$ satisfies faithfully flat descent: given a  faithfully flat cover $Y \to X,$  if $Y^\bullet$ is the semi-simplicial object given by the \v{C}ech nerve of $Y \to X$, then $ \varprojlim \Qcoh(Y^\bullet) \simeq \Qcoh(X).$
 Passing to dualizable objects on both sides, we obtain that $\Db{-}$ also satisfies faithfully flat descent.

\subsubsection{Exact sequences}
 Let $\cC$ be a triangulated dg-category.  We say that two objects $A$ and $A'$ are \emph{equivalent} if there is a degree $0$ map $A\to A'$ that becomes an isomorphism in the homotopy category of $\cC$.  If $\iota\colon \cC' \to \cC$ is a fully faithful functor, we often view $\cC'$  as a subcategory of $\cC$ and identify $\iota$ with an inclusion.  Accordingly, we will usually denote the image under  $\iota$ of an object $A$ of $\cC'$  simply by $A$ rather than $\iota(A).$ 
We will always assume that subcategories are closed under equivalence. That is, if $\cC'$ is a full subcategory of $\cC,$ $A$ is an object of $\cC',$ and $A'$ is an object of $\cC$ which is equivalent to $A,$ we will always assume that $A'$ lies in $\cC'$ as well.

Recall that if $\cD$ is a full subcategory of $\cC,$ $(\cD)^\bot$ denotes the \emph{right orthogonal} of $\cD$, i.e. the full subcategory of $\cC$ consisting of the objects $B$ such that the Hom-complex $\Hom_\cC(A,B)$ is acyclic for every object $A \in \cD$. 
Let 
$\{\cC_1, \ldots, \cC_n\}$ be a finite collection of triangulated subcategories of $\cC$ such that, for all $1 \leq i < j \leq n,$ $\, \cC_{i} \subseteq (\cC_{j})^\bot.$ 
Then we denote  by 
$
\langle \cC_1 \, , \ldots \,, \cC_n \rangle
$
 the smallest 
 triangulated subcategory 
of $\cC$ containing all the  subcategories $\cC_i$.  
An \emph{exact sequence} of triangulated dg-categories is a sequence  
\begin{equation}
\label{dgverdier}
\cA \stackrel{F} \longrightarrow \cB \stackrel{G} \longrightarrow \cC \end{equation}
which is both a fiber and a cofiber sequence in $\dgCat.$  This  is an analogue of classical \emph{Verdier localizations} of triangulated categories in the dg setting. Exact sequences of dg-categories are detected at the homotopy level: it can be shown that (\ref{dgverdier}) is an exact sequence  if and only if 
the sequence of homotopy categories
$$
\mathrm{Ho}(\cA) \xrightarrow{\mathrm{Ho}(F)}  
\mathrm{Ho}(\cB) 
\xrightarrow{\mathrm{Ho}(G)} 
\mathrm{Ho}(\cC)
$$  
is a classical Verdier localization of triangulated categories.

The functor $F$ admits a right adjoint $F^R$ exactly if $G$ admits a right adjoint  $G^R$, and similarly for left adjoints, see e.g. \cite[Proposition 4.9.1]{krause2010localization}.  
If $F$ (or equivalently $G$) admits a right adjoint we say that (\ref{dgverdier}) is a \emph{split  exact sequence}. In this case the functor $G^R$ is fully faithful 
and we have that  
$
\cB=\langle G^R(\cC) \, , \, \cA \rangle. 
$
As we  indicated earlier, since $G^R$ is fully faithful we will drop it from our notations  whenever this is not likely to create confusion: thus we will denote  
$G^R(\cC)$ simply by $\cC,$  and 
write 
$
\cB=\langle \cC\, , \,  \cA \rangle. 
$
 
 \subsection{Root stacks} 
 For the convenience of the reader, we include a brief reminder about root stacks of Cartier divisors in an algebraic stack. More details can be found in \cite[Section 2.1]{SST2} and references therein.
 
 Let $X$ be a scheme. Then a Cartier divisor $D\subset X$ with ideal sheaf $I\subset \cO_X$ is said to be \emph{simple} (or \emph{strict}) \emph{normal crossings} if for every $x\in D$ the local ring $\cO_{X,x}$ is regular, and there exist a regular sequence  $f_1,\hdots, f_n\in \cO_{X,x}$ such that $I_x=\langle f_1,\hdots, f_k\rangle\subset \cO_{X,x}$ for some $k\leq n$. Moreover, $D$ is said to be \emph{normal crossings} if \'etale locally on $X$ it is  simple normal crossings. These notions are naturally extended to algebraic stacks by working on an atlas.
  
Given an algebraic stack $X$ with a normal crossings divisor $D\subset X$ one can form a root stack $\radice[r]{(X,D)}$ for every $r\in \bN$. If $D$ is simple normal crossings, this has a simple functorial description as the stack parametrizing tuples $(\cL_1,s_1),\hdots, (\cL_k,  s_k)$ of line bundles with global sections, with isomorphisms $(\cL_i,s_i)^{\otimes r}\cong (\cO(D_i),s_{D_i})$, where $D_i$ are the irreducible components of $D$ and $s_{D_i}$ is the canonical section of the line bundle $\cO(D_i)$, whose zero locus is $D_i$. Passing to $\radice[r]{(X,D)}$ has the effect of attaching a stabilizer $\mu_r^k$ to points in the intersection of exactly $k$ irreducible components of $D$.

When $D$ is only normal crossings, this description is not correct, because it doesn't distinguish the branches of $D$ at points where an irreducible components self-intersects (as for example in the node of an irreducible nodal plane cubic). In this case we have to use the notion of a root stack of a logarithmic scheme (see \cite{borne-vistoli,talpo2014moduli}). In this particular case, we can think about $\radice[r]{(X,D)}$ as being the gluing of the $r$-th root stacks $\radice[r]{(U,D|_U)}$, where $\{U\to X\}$ is an \'etale cover of $X$ where $D$ becomes simple normal crossings.

The root stacks $\radice[r]{(X,D)}$ form an inverse system. Indeed, if $r\mid r'$ there is a natural projection $\radice[r']{(X,D)}\to \radice[r]{(X,D)}$. The inverse limit $\radice[\infty]{(X,D)}=\varprojlim_r \radice[r]{(X,D)}$ is the \emph{infinite root stack} \cite{TV} of $(X,D)$. It is a pro-algebraic stack that embodies the ``logarithmic geometry'' of the pair $(X,D)$ in its stacky structure, and it is an algebraic analogue of the so-called ``Kato-Nakayama space'' \cite{CSST, TaV}. In particular, quasi-coherent sheaves on it correspond exactly to parabolic sheaves \cite{borne-vistoli} on the pair $(X,D)$, and finitely presented sheaves can be identified with finitely presented sheaves on the Kummer-flat site of $(X,D)$ (and also on the Kummer-\'etale site, if the base ring has characteristic $0$).

\section{Preordered semi-orthogonal decompositions}
\label{psodsec}

 \label{gluing psods}
In this section we  introduce \emph{preordered semi-orthogonal decompositions} (psod-s). We will study limits of dg-categories equipped with compatible psod-s. This concept was also considered in \cite{bergh2016geometricity}. Once 
the set-up is in place the proof of the main result (Theorem \ref{limitpsod2}) is not difficult,  relaying as it does on general properties of limits  in the $\infty$-category of dg-categories. As an application of our theory, we will obtain gluing results for semi-orthogonal decompositions along appropriate faithfully flat covers. 
In the setting of classical triangulated categories a related result, called \emph{conservative descent}, has been obtained in the recent paper  \cite{bergh2017conservative}. The advantage of our set-up is that conservative descent follows in a straightforward way from  the general formalism, and thus the proof  that we will give is considerably easier than the one contained in \cite{bergh2017conservative}.

 We will follow closely the account of psod-s contained in the previous paper of the authors  \cite{SST2}. We refer the reader to  \cite{kuznetsov2015semiorthogonal} for a comprehensive survey of the classical theory of semi-orthogonal decompositions.

Let $\cC$ be a triangulated dg-category, and let $P$ be a preordered set. Consider a collection 
of full triangulated  subcategories $\iota_x\colon \cC_x  
\longrightarrow \cC$ indexed by $x \in P$.

\begin{definition}\label{def psod} \mbox{ }
\begin{itemize} 
\item The subcategories $\cC_x$ form a    \emph{preordered semi-orthogonal decomposition (psod) of type $P$}  if they satisfy the following three properties.
\begin{enumerate}
\item For all $x \in P,$ $\cC_x$ is a non-zero  \emph{admissible} subcategory: that is, the embedding  $
\iota_x $ admits a right adjoint and a left adjoint, which we denote by
$$
r_x\colon \cC \longrightarrow \cC_x \quad \text{and} \quad 
l_x\colon \cC \longrightarrow \cC_x .
$$
\item If $y<_P x$, i.e. $y \leq_P  x,$ and $ x \not =y,$ then $\cC_y \subseteq \cC_x^{\bot}.$ 
\item $\cC$ is the smallest stable subcategory of $\cC$ containing $\cC_x$ for all $x \in P.$
\end{enumerate}
\item We say that the subcategories $\cC_x$ form a    \emph{pre-psod  of type $P$}  if they satisfy only properties $(1)$ and $(2).$ 
\end{itemize}
\end{definition}

Note that from condition $(2)$ it follows that if we have both $y<_P x$ and $x<_P y$, then $$\langle \cC_x,\cC_y\rangle =\langle \cC_y,\cC_x\rangle \simeq \cC_x\oplus \cC_y$$

\begin{definition}
If $\cC$ is equipped with a psod of type $P$,  we write
$
\cC = \langle \, \cC_x, \, x \in P \, \rangle. 
$
 We sometimes denote the category 
$\cC$ by 
$(\cC, P)$ to make explicit the role of the indexing preordered set $P.$  
\end{definition}

\begin{remark}
If $\cC$ is the zero category, then it carries a psod indexed by the empty preordered set. 
\end{remark}

We will be interested in gluing psod-s along limits of dg-categories. This requires introducing an appropriate notion of exact functor compatible with psod-s. We do so after introducing some preliminary concepts. 

\begin{definition}
Let $P$ and $Q$ be preordered sets. We say that a map of sets 
$
\phi\colon Q \to P
$
is \emph{order-reflecting} if for all 
$x,y$ in $Q$ we have $\phi(x) \leq_P  \phi(y) \; \Longrightarrow \; x \leq_Q y.$
\end{definition}

We denote by $\mathrm{PSets}^{\mathrm{refl}}$  the category of preordered sets and order-reflecting maps between them. Let us 
summarize some of its basic properties. Note first that the forgetful functor to sets 
$$
U\colon \mathrm{PSets}^{\mathrm{refl}} 
\longrightarrow \mathrm{Sets}
$$
admits a left and a right adjoint, $ \, U^L \dashv U \dashv U^R \, $.  
  The functor $U^L$ sends a set $S$ to the preordered set   
$(S, \leq )$ such that $x \leq   y$ for all  $x, y \in S,$ with the obvious definition on morphisms. We call $U^L(S)$ the \emph{complete preorder} on the set $S.$    
 The functor $U^R$ sends a set $S$ to the preordered set   
$(S, \leq )$ such that $x \leq   y,$ if and only if $x=y,$ with the obvious definition on morphisms. We call $U^R(S)$ the \emph{discrete preorder} on the set $S.$
 
The category $\mathrm{PSets}^{\mathrm{refl}}$ admits all small limits and colimits. Since $U$ has a right and a left adjoint, they are computed by the  underlying  set-theoretic limits and colimits.  
In particular, the coproduct of a collection of 
partially ordered sets  $\{(P_i, \leq_{P_i} )\}_{i \in I}$ is given by the disjoint union  
$
 P=\coprod_{i \in I} P_i 
$ equipped with a preorder $\leq_P$  defined as follows: let $i, j \in I$ and let $x$ be in  
$P_i$ and $y$  in $P_j,$  then 
\begin{itemize}
\item if $i=j,$  we have $x \leq_P  y$ if and only if $x \leq_{P_i} y$;  
 \item if $i \neq j,$  we have $x \leq_P y$.
 \end{itemize}
Let us describe next the push-out of  preordered sets  in $\mathrm{PSets}^{\mathrm{refl}},$
$$
\xymatrix{
(P, \leq_P) &  \ar_{\pi_1}[l]   (P_1, \leq_{P_1})  \\
(P_2, \leq_{P_2}) \ar^{\pi_2}[u]& \ar[l]^-{\pi_4}  \ar[u]_-{\pi_3} (P_3, \leq_{P_3}).}
$$
The  set $P$ is the push-out of the underlying sets,   
and is equipped with a preorder $\leq_P$  defined as follows: let $z$ and $z'$ be in 
$P,$ then $z \leq_P z' $ if and only if    \begin{itemize}
  \item for all pairs $x, x' \in P_1$ such that $\pi_1(x)=z$ and  $\pi_1(x')=z'$,  we have $ x \leq_{P_1} x',$ and 
   \item for all pairs $x, x' \in P_2$ such that $\pi_2(x)=z$ and  $\pi_2(x')=z'$,  we have $ x \leq_{P_2} x'.$
  \end{itemize}

\begin{remark}
\label{discdisc}
Let $\phi\colon (P, \leq_P) \to (Q, \leq_Q)$ be an order-reflecting map. Then, for all $x \in Q,$ the preordered set $\phi^{-1}(x) \subseteq (P, \leq_P)$ is equipped with the complete preorder. Indeed, since $\phi$ is order-reflecting, for all $y$ and 
$y'$ in $\phi^{-1}(x)$ we must have that $y \leq_P y'$ and $y' \leq_P y.$ 
\end{remark}

Next, we introduce a notion of compatible functor between categories equipped with a psod. 

\begin{definition}
\label{fun psod}
Let $(\cC,P)$ and $(\cD,Q)$ be triangulated dg-categories equipped with a pre-psod 
$$
\cC= \langle \, \cC_x, \, x \in P \, \rangle \quad \text{and} \quad  \cD = \langle \, \cD_y, \, y \in Q \, \rangle  
$$
and let  $ \, 
F\colon \cC \rightarrow \cD
\, $ be an exact functor. 
A structure of \emph{ordered functor} on $F$ is the datum of a function $\phi_F\colon Q\to P$  satisfying the following properties: 
\begin{itemize}[leftmargin=*] 
\item The function $\phi_F$ is an 
order-reflecting map,  
and for all $x$ in $P$ there is a \emph{strictly commutative} square
\[
\begin{gathered}
\xymatrix{
\cC_x \ar[r]^-{\iota_x} 
\ar[d]_-{F|_{\cC_x}}  & \cC \ar[d]^-F \\
 \langle  \cD_y \, , \, y \in \phi_F^{-1}( x) \rangle
\ar[r]^-\simeq  \ocell{ur}      \ar[r] & \cD.}
\end{gathered}
\]
\item For all $x \in P,$ set   
$
\, \, r_{\phi_F^{-1}(x)} := \bigoplus_{y \in \phi_F^{-1}( x)} r_y \, \, ,$  and 
$
\, \, l_{\phi_F^{-1}(x)} := \bigoplus_{y \in \phi_F^{-1}( x)} l_y \, \, ,$
$$
r_{\phi_F^{-1}(x)} \colon \cD \longrightarrow \bigoplus_{y \in   \phi_F^{-1}(x)} \cD_y, \quad 
l_{\phi_F^{-1}(x)}\colon \cD \longrightarrow \bigoplus_{y \in   \phi_F^{-1}(x)} \cD_y. 
$$
Then  the following \emph{Beck--Chevalley  condition} holds: there are commutative diagrams
$$
\xymatrix{
\cC_x \ar[d]_-{F|_{\cC_x}}  && \cC \ar[d]^-F \ar[ll]_-{r_x}   
\twocell{dll}^{\alpha_x^R}
&& \cC_x \ar[d]_-{F|_{\cC_x}}  && \cC \ar[d]^-F \ar[ll]_-{l_x}\\
\bigoplus_{y \in   \phi_F^{-1}(x)} \cD_y && \cD \ar[ll]^ -{r_{\phi_F^{-1}(x)}} && \bigoplus_{y \in   \phi_F^{-1}(x)} \cD_y && \cD \ar[ll]^ -{l_{\phi_F^{-1}(x)}} \twocell{ull}^{\alpha_x^L}
}
$$
where $\alpha_x^R$ and $\alpha_x^L$ are defined as in Remark \ref{rmk:nat.trans}.
\end{itemize}
\end{definition}

\begin{remark}
Being \emph{ordered} is a structure on a functor $F\colon (\cC, P) \to (\cD, Q)$ and \emph{not a property}: there might be more than one function $\phi_F$ satisfying the properties of Definition \ref{fun psod}.  
\end{remark}

\begin{remark}
\label{rem fun psod}
As we noted in Remark \ref{discdisc}, $\phi_F^{-1}(x)$ is equipped with the complete preorder.   This yields a canonical  equivalence 
$
\bigoplus_{y \in   \phi_F^{-1}(x)} \cD_y \simeq   \langle  \cD_y \, , \, y \in \phi_F^{-1}( x) \rangle
$
which we assumed implicitly in the formulation of property  $(2)$ of Definition \ref{fun psod}. In particular,  
$r_{\phi_F^{-1}(x)}$ and $l_{\phi_F^{-1}(x)}$ are 
the right and left adjoints of the fully faithful 
functor $\iota_{\phi_F^{-1}(x)}:= \oplus_{y \in \phi_F^{-1}} \iota_y\colon \langle  \cD_y \, , \, y \in \phi_F^{-1}( x) \rangle 
\to \cD $.
\end{remark}

Let $I$ be a small category, and consider a diagram
$
\alpha\colon I \to \dgCat
$. For all $i \in I$ set $\cC_i:=\alpha(i)$. 
Assume  that:
\begin{enumerate}
\item for all $i \in I$, the category $\cC_i$ is equipped with a pre-psod $\cC_i=(\cC_i, P_i)$, and
\item for all arrows $f\colon i \to j$ in $I$, the functor $\alpha(f)\colon (\cC_i, P_i) \to (\cC_j, P_j)$ is equipped with a ordered structure $\phi_{\alpha(f)}$.
\end{enumerate}
Assume additionally that the assignments  
$$
i \in I \, \mapsto \, P_i \quad \text{and}  \quad (f\colon i \to j) \in I \, \mapsto \, \phi_{\alpha(f)}\colon P_j \to P_i
$$
yield a well-defined functor $I^{op} \to \mathrm{PSets}^{\mathrm{refl}}$.  
Let $P$ be the colimit of this diagram, and for all $i \in I$ let $\phi_i\colon P_i \to P$ be the structure maps.

\begin{proposition}
\label{limit of psod} 
The limit category 
$
\cC = \varprojlim_{i \in I} \cC_i
$
carries a pre-psod with indexing preordered set
$
P = \varinjlim_{i \in I} P_i 
$ such that, for all $w \in P$ the subcategory $ \cC_w \subset \cC$ is given by 
$$
  \cC_w \simeq \varprojlim_{i \in I} \bigoplus_{z \in \phi_i^{-1}(w)} \cC_{i, z}.
 $$
\end{proposition}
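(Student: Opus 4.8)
The plan is to realize each candidate subcategory $\cC_w$ as the limit of a sub-diagram of $\alpha$, to deduce its admissibility from Lemmas \ref{limitleft} and \ref{limitright}, and to obtain semi-orthogonality from the fact that the colimit structure maps $\phi_i\colon P_i\to P$ are order-reflecting.

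To set this up, fix $w\in P$. For each $i\in I$ the fiber $\phi_i^{-1}(w)\subseteq P_i$ carries the complete preorder (Remark \ref{discdisc}), so the subcategories $\cC_{i,z}$ with $z\in\phi_i^{-1}(w)$ are pairwise bi-orthogonal, $\langle\cC_{i,z}\mid z\in\phi_i^{-1}(w)\rangle\simeq\bigoplus_{z\in\phi_i^{-1}(w)}\cC_{i,z}$ is an admissible subcategory of $\cC_i$, and its inclusion $\iota_{\phi_i^{-1}(w)}$ has left adjoint $l_{\phi_i^{-1}(w)}=\bigoplus_z l_z$ and right adjoint $r_{\phi_i^{-1}(w)}=\bigoplus_z r_z$ (Remark \ref{rem fun psod}). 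For an arrow $f\colon i\to j$ in $I$, the cocone identity $\phi_i\circ\phi_{\alpha(f)}=\phi_j$ together with the strictly commutative squares in the ordered structure on $\alpha(f)$ shows that $\alpha(f)$ carries $\bigoplus_{z\in\phi_i^{-1}(w)}\cC_{i,z}$ into $\langle\cC_{j,y}\mid y\in\phi_{\alpha(f)}^{-1}(\phi_i^{-1}(w))\rangle=\bigoplus_{z\in\phi_j^{-1}(w)}\cC_{j,z}$; since $i\mapsto P_i$, $f\mapsto\phi_{\alpha(f)}$ is by hypothesis a functor $I^{\op}\to\mathrm{PSets}^{\mathrm{refl}}$, these restrictions organize into a pseudo-functor $\gamma^w\colon I\to\dgCat$, $i\mapsto\bigoplus_{z\in\phi_i^{-1}(w)}\cC_{i,z}$, together with a pseudo-natural transformation $\iota^w\colon\gamma^w\Rightarrow\alpha$ whose components are the $\iota_{\phi_i^{-1}(w)}$. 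I then set $\cC_w:=\varprojlim_i\gamma^w(i)$ and $\iota_w:=\varprojlim_i\iota_{\phi_i^{-1}(w)}\colon\cC_w\to\cC$.

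Next I would show $\cC_w$ is admissible in $\cC$. Full faithfulness and exactness of $\iota_w$ follow from those of the $\iota_{\phi_i^{-1}(w)}$, since mapping complexes in a homotopy limit of dg-categories are the homotopy limits of the mapping complexes; so $\cC_w$ is (equivalent to) a full triangulated subcategory of $\cC$. For the adjoints I apply Lemma \ref{limitleft} and Lemma \ref{limitright} to $\iota^w$: assumption (1) of each holds by the previous paragraph, and the Beck--Chevalley assumption (2) reduces, after using $\phi_j^{-1}(w)=\bigsqcup_{z\in\phi_i^{-1}(w)}\phi_{\alpha(f)}^{-1}(z)$ and the decomposition of $l_{\phi_i^{-1}(w)}$, $r_{\phi_i^{-1}(w)}$ as direct sums, to the Beck--Chevalley conditions already built into the ordered structure on each $\alpha(f)$ (Definition \ref{fun psod}). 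This gives $\iota_w$ a left adjoint $l_w=\varprojlim_i l_{\phi_i^{-1}(w)}$ and a right adjoint $r_w=\varprojlim_i r_{\phi_i^{-1}(w)}$, so property (1) of Definition \ref{def psod} holds.

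Finally, for property (2) I take $w<_P w'$. Since $\phi_i$ is order-reflecting, for all $z\in\phi_i^{-1}(w)$ and $z'\in\phi_i^{-1}(w')$ we have $\phi_i(z)=w\leq_P w'=\phi_i(z')$ and $z\neq z'$, hence $z<_{P_i}z'$, so $\cC_{i,z}\subseteq\cC_{i,z'}^{\bot}$ by property (2) of the psod on $\cC_i$; as the right orthogonal of a collection equals the right orthogonal of the triangulated subcategory it generates, this yields $\bigoplus_{z\in\phi_i^{-1}(w)}\cC_{i,z}\subseteq\big(\bigoplus_{z'\in\phi_i^{-1}(w')}\cC_{i,z'}\big)^{\bot}$ in $\cC_i$. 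Consequently, for $A\in\cC_{w'}$ and $B\in\cC_w$ each complex $\Hom_{\cC_i}(A_i,B_i)$ is acyclic, hence so is $\Hom_\cC(A,B)\simeq\varprojlim_i\Hom_{\cC_i}(A_i,B_i)$, i.e.\ $\cC_w\subseteq\cC_{w'}^{\bot}$. This establishes the pre-psod on $\cC$ with the asserted description $\cC_w\simeq\varprojlim_i\bigoplus_{z\in\phi_i^{-1}(w)}\cC_{i,z}$. The hard part will be the Beck--Chevalley verification in the admissibility step: one must check not merely that the composites in question agree up to equivalence, but that the specific canonical transformations $\alpha_{i,j}^L$, $\alpha_{i,j}^R$ of Lemmas \ref{limitleft} and \ref{limitright} are invertible; this means unwinding the units and counits of the adjunctions $l_z\dashv\iota_z\dashv r_z$ (all the relevant $2$-cells being instances of the construction in Remark \ref{rmk:nat.trans}) and observing that they decompose as direct sums of the invertible $2$-cells supplied by the ordered structures.
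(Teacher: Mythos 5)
Your proof is correct, but it follows a genuinely different route from the paper's. The paper first reduces to the two special cases of products and fiber products (``every limit can be expressed in terms of products and fiber products''): products are handled by hand, and for a fiber product $\cC_1\times_{\cC_3}\cC_2$ the factor $\cC_w$ is defined as a fiber product of the generated subcategories, admissibility comes from Lemma \ref{limitleft}, and semi-orthogonality is checked by an explicit computation in a concrete model of the pullback, where the Hom-complex is the cone of a map whose source vanishes by semi-orthogonality in $\cC_1$ and $\cC_2$ and whose target vanishes in $\cC_3$ because $\phi_{FK}=\phi_{GH}$ is order-reflecting. You instead treat an arbitrary limit uniformly: you realize $\cC_w$ as the limit of the sub-diagram $i\mapsto\bigoplus_{z\in\phi_i^{-1}(w)}\cC_{i,z}$, obtain both adjoints from Lemmas \ref{limitleft} and \ref{limitright} by reducing the Beck--Chevalley $2$-cells, via $\phi_j^{-1}(w)=\coprod_{z\in\phi_i^{-1}(w)}\phi_{\alpha(f)}^{-1}(z)$, to the invertible ones postulated in Definition \ref{fun psod}, and obtain semi-orthogonality from the fact that Hom-complexes in a limit of dg-categories are limits of Hom-complexes, each term being acyclic by order-reflectingness of $\phi_i$ (this last point quietly replaces the paper's use of the ``base'' vertex $\cC_3$: in your argument every vertex of $I$ contributes an acyclic complex, so nothing special needs to be said about it). What the paper's reduction buys is completely explicit Hom computations; what your argument buys is that you never need to decompose a general homotopy limit into products and pullbacks nor transport the pre-psod and ordered-functor data through that decomposition, at the cost of checking pseudo-functoriality of the sub-diagrams $\gamma^w$ and identifying the canonical $2$-cells of Lemmas \ref{limitleft}--\ref{limitright} with direct sums of those in Definition \ref{fun psod} — a point you rightly flag and whose verification is routine since all cells are instances of Remark \ref{rmk:nat.trans}. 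Like the paper, you do not address the ``non-zero'' clause in Definition \ref{def psod}(1), which could fail for some $w$; this is an issue shared with (and glossed over by) the paper's own proof.
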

\begin{proof}
Every limit can be expressed in terms of products and fiber products. Thus it is sufficient to show the statement for these two classes of limits. Let us consider the case of products first. The product of dg-categories $\{\cC_i\}_{i \in I}$ is the limit of the zero functors
$
(\cC_i, P_i) \longrightarrow (0, \varnothing). 
$ 
Thus, we need to show that the product category 
$\cC = \prod_{i \in I}\cC_i$
carries a pre-psod indexed by 
$
P = \coprod_{i \in I} P_i.  
$
If $x$ is in $P,$ there is a $j \in I$ such that $x$ is in $P_j \, ,$ and   
we denote  
$
\iota_x\colon \cC_x \longrightarrow \cC 
$
the subcategory of $\cC$ given by 
$$
\cC_x := (\cC_j)_x  \times \langle 0 \rangle  \longrightarrow \cC_j \times \prod_{i \in I, \, i \neq j}\cC_i \stackrel{\simeq} \longrightarrow \cC. 
$$
It is immediate to verify that the collection of subcategories 
$
\cC_x$ for $ x \in P$
satisfies properties $(1)$ and $(2)$ from Definition 
\ref{def psod}, and thus that it is a pre-psod.

Let us check next that the statement holds for fiber products.  Let 
$$
\xymatrix{
\cC \ar[d]_-H \ar[r]^-K & \cC_1 \ar[d]^-F \\
\cC_2 \ar[r]^-{G} & \cC_3
}
$$
be a fiber product of triangulated dg-categories, such that $\cC_1,$ $\cC_2$ and $\cC_3$ are equipped with a pre-psod
$$
(\cC_1, P_1) = \langle \,  \cC_{1,x}, \, x \in P_1 \, \rangle, \quad 
(\cC_2, P_2) = \langle \, \cC_{2,y}, \, y \in P_2 \, \rangle, 
\quad 
(\cC_3, P_3) = \langle \, \cC_{3,z}, \, z \in P_3 \, \rangle,
$$
and $G$ and $F$ are ordered functors. 
Let $P$ be the pushout of the $P_i$, and denote 
$$
\phi_K\colon P_1 \to P, \quad \phi_H\colon P_2 \to P
$$
the corresponding order-reflecting maps. We also set 
$\phi_{FK}:=\phi_K \circ \phi_F$ and 
$\phi_{GH}:=\phi_H \circ \phi_G$. Note that, since $P$ is a push-out, $\phi_{FK} = \phi_{GH}$.

For all 
$w$ in $P$ we set 
$$
\cC_{w} := \langle \cC_{1,x} \, , \, x \in \phi_K^{-1}(w)  \rangle \times_{\cC_3}  \langle \cC_{2,y} \, , \, y \in \phi_H^{-1}(w) \rangle. 
$$
Since $\langle \cC_{1,x} \, , \, x \in \phi_K^{-1}(w)  \rangle$ and $\langle \cC_{2,y} \, , \, y \in \phi_H^{-1}(w) \rangle$ are full subcategories of $\cC_1$ and $\cC_2$, we have that 
$\cC_w$ is a full subcategory of $\cC = \cC_1 \times_{\cC_3} \cC_2.$ Note that we can write $\cC_w$ equivalently as the   fiber product $$ \langle \cC_{1,x} \, , \, x \in \phi_K^{-1}(w)  \rangle \times_{\langle \cC_{3, z} \, , \, z \in \phi_{GH}^{-1}(w) \rangle}  \langle \cC_{2,y} \, , \, y \in \phi_H^{-1}(w) \rangle,  
$$  
since $\langle \cC_{3, z} \, , \, z \in \phi_{GH}^{-1}(w) \rangle $ is a full triangulated subcategory of $\cC_3$, and the functors $$\langle \cC_{1,x} \, , \, x \in \phi_K^{-1}(w)  \rangle \to \cC_3 \leftarrow \langle \cC_{2,y} \, , \, y \in \phi_H^{-1}(w) \rangle$$ factor through it. 

We will show that the collection of subcategories of $\cC$  given by 
$
\langle \, \cC_{w}, \, w \in P \, \rangle
$ 
satisfies the properties of a pre-psod of type $P.$ Property $(1)$ follows from Lemma \ref{limitleft}. Thus we are reduced to checking property $(2)$. In order to do this, it is useful to use an explicit model for the fiber product 
of dg-categories, which can be found for instance in 
\cite[Appendix IV]{drinfeld2004dg}. The category $\cC_1 \times_{\cC_3} \cC_2$ 
has
\begin{itemize}
\item as objects, triples $(A_1, A_2, u\colon F(A_1) \to G(A_2)),$ 
where $A_1$ is in $\cC_1,$ $A_2$ is in $\cC_2,$ and $u$ is an equivalence,  
\item while the Hom-complex 
$
\mathrm{Hom}_{\cC}((A_1, A_2, u), (A'_1, A'_2, u'))
$
is given by the cone of the  map 
\begin{equation}
\label{hom-cmplx}
\Hom_{\cC_1} (A_1, A'_1) \oplus \Hom_{\cC_2} (A_2, A'_2) \xrightarrow{u'F- Gu}   \Hom_{\cC_3} (F(A_1), G(A'_2)).
\end{equation}
\end{itemize}
 If $w < w'$ are distinct elements of $P,$ we have to show that $\cC_{w} \subseteq \cC_{w'}^{\bot}.$ That is, we need to prove that if $(A_1, A_2, u)$ is in $\cC_w$ and $(A'_1, A'_2, u')$ is  in $\cC_{w'},$ then 
$
\mathrm{Hom}_{\cC}((A_1', A_2', u'), (A_1, A_2, u)) \simeq 0. 
$
 This however  follows immediately by the calculation of the Hom-complexes in $\cC$ given by (\ref{hom-cmplx}). Indeed, since we have inclusions  
 $$
\bigoplus_{x \in \phi_K^{-1}(w) } \cC_{1, x} \subseteq \Big ( \bigoplus_{x \in \phi_K^{-1}(w') } \cC_{1, x} \Big )^{\bot} \quad , \quad \bigoplus_{y \in \phi_H^{-1}(w) } \cC_{2, y} \subseteq \Big ( \bigoplus_{y \in \phi_H^{-1}(w') } \cC_{2, y} \Big )^{\bot}
 $$
 the source of the morphism of complexes (\ref{hom-cmplx}) vanishes.  
Further, we have that  
$$ F(A'_1) \in \langle  \cC_{3,z} \, , \, z \in \phi_{FK}^{-1}(w')   \rangle \quad \text{and} \quad G(A_2) \in \langle  \cC_{3,z} \, , \, z \in \phi_{GH}^{-1}(w)   \rangle,$$ and as $\phi_{GH}=\phi_{FK}$ is order-reflecting we have that 
$$
\langle  \cC_{3,z} \, , \, z \in \phi_{GH}^{-1}(w)   \rangle \subseteq \Big (\langle  \cC_{3,z} \, , \, z \in \phi_{FK}^{-1}(w')   \rangle \Big )^{\bot}.
$$ 
Hence also the target of the morphism of complexes (\ref{hom-cmplx}) vanishes and thus its cone is zero, and this concludes the proof. 
\end{proof}

We will be interested in calculating limits of categories equipped with actual psod-s, rather than just pre-psod-s. However, in  general  we cannot conclude that the limit category $\cC$ will carry a psod, as the admissible subcategories constructed in the proof of Proposition \ref{limit of psod}  might fail to generate $\cC.$ We clarify this point via an example in Example \ref{rem inf prod} below. 
Then, in Theorem \ref{limitpsod2}, we give sufficient conditions ensuring that the limit category will carry an actual psod.

\begin{example}
\label{rem inf prod}
Let $\{\cC_n \}_{n \in \mathbb{N}}$ be a collection of triangulated dg-categories. We can equip them with a psod indexed by the trivial preorder 
$
P_n = \{*\}$ for all $n \in \mathbb{N}.$  
Then Proposition  \ref{limit of psod} yields a pre-psod on $\cD = \prod_{n \in \mathbb{N}} \cC_n$ indexed by the set $\mathbb{N}$ equipped with 
the discrete preorder: the subcategories of $\cC$ forming this pre-psod are given by 
$$
\cD_n:=\cC_n  \times \langle 0 \rangle  \longrightarrow \cC_n \times \prod_{m \in \mathbb{N}, \, m \neq n}\cC_m \stackrel{\simeq} \longrightarrow \cC. 
$$
It is easy to see that the collection $\{\cD_n\}_{n \in \mathbb{N}}$ fails to be a psod. Indeed the category spanned by the 
$\cD_n$ is
$
\langle \cD_n \, , \, n \in \mathbb{N} \rangle \simeq \bigoplus_{n \in \mathbb{N}} \cD_n,
$
which is strictly contained in $\cC$. 
\end{example}

Let  
$(\mathbb{N}, \leq)$ be the set of natural numbers equipped with their usual ordering. 
\begin{definition}
\label{defdirected}
A preordered set $(P, \leq_P)$ is \emph{directed} if there exist an order-reflecting map 
$$(P, \leq_P) \to (\mathbb{N}, \leq).$$ 
\end{definition}
\begin{remark}
\label{rem numbering}
Note that if $(P, \leq_P)$ is a directed finite preorder, then we can  number   its elements 
$
\{p_0, \ldots, p_m\}
$
by natural numbers in such a way that, if $0 \leq n < n' \leq m$, then $p_n <_P p_{n'}.$ 
\end{remark}
In the statement of Theorem \ref{limitpsod2} below we use the same notations as in Proposition \ref{limit of psod}, and make the same assumptions that were made there.   
\begin{theorem}
\label{limitpsod2}
Assume that for all $i \in I$, 
$\cC_i = (\cC_i, P_i)$ is equipped with a psod.   
Assume also that the colimit of indexing preorders 
$
P = \varinjlim_{i \in I} P_i
$
is finite and directed. 
Then the limit category 
$
\cC = \varprojlim_{i \in I} \cC_i
$
carries a  psod with indexing preordered set $P$, 
$ \, 
\cC = \langle \cC_w, w \in P \rangle
\, $, such that for all $w \in P$  
$$
\cC_w \simeq \varprojlim_{i \in I} \bigoplus_{z \in \phi_i^{-1}(w)} \cC_{i, z}.
 $$ 
\end{theorem}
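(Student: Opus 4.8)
The plan is to build on Proposition \ref{limit of psod}, which already provides a \emph{pre}-psod on $\cC$ with indexing set $P$ and the desired description of the summands $\cC_w \simeq \varprojlim_{i} \bigoplus_{z \in \phi_i^{-1}(w)} \cC_{i,z}$. So properties $(1)$ and $(2)$ of Definition \ref{def psod} come for free, and the only thing left to prove is property $(3)$: that the admissible subcategories $\{\cC_w\}_{w \in P}$ generate $\cC$ as a stable subcategory. The hypotheses that $P$ is finite and directed are exactly what should make this work, in contrast to the infinite-discrete counterexample of Example \ref{rem inf prod}.

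First I would invoke Remark \ref{rem numbering} to enumerate $P = \{p_0, \ldots, p_m\}$ so that $p_n <_P p_{n'}$ whenever $n < n'$. The idea is then to produce, for each object $E$ of $\cC$, a finite filtration with graded pieces lying in the $\cC_{p_n}$. Concretely, I would define projection/truncation functors: since each $\cC_{p_n}$ is admissible (property $(1)$, already known), it has a right adjoint $r_{p_n}$, and one wants to peel off the ``highest'' piece first. Working with the explicit description $\cC_w \simeq \varprojlim_i \bigoplus_{z \in \phi_i^{-1}(w)} \cC_{i,z}$, and using that each $\cC_i$ \emph{is} a genuine psod indexed by $P_i$ with the ordering pulled back along $\phi_i$ being order-reflecting, I would check levelwise that for an object $(E_i)_{i\in I}$ of $\cC$ the semi-orthogonal mutation/truncation triangles assembling $E_i$ from its components in the $P_i$-psod are compatible with the structure functors $\alpha(f)$ — this compatibility is precisely the Beck--Chevalley content packaged into the ordered-functor structure (Definition \ref{fun psod}) and Lemma \ref{limitright}. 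That compatibility lets the levelwise truncation triangles be assembled into a triangle in the limit category $\cC$.

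More precisely, the key step is: for $0 \le n \le m$ let $\cC^{\ge n} := \langle \cC_{p_n}, \ldots, \cC_{p_m}\rangle$ (the subcategory generated by the ``top'' pieces). I would show by downward induction on $n$ that $\cC^{\ge n}$ is admissible in $\cC$ with right adjoint computed levelwise, and that there is a functorial exact triangle $\cC_{p_{n}} \to \cC^{\ge n} \to \cC^{\ge n+1}$ — equivalently a split exact sequence — where at level $i$ this is the corresponding semi-orthogonal truncation for the psod on $\cC_i$ restricted to the fibers $\phi_i^{-1}(\{p_n,\ldots,p_m\})$. The order-reflecting property of $\phi_i$ guarantees that $\phi_i^{-1}(\{p_n,\ldots,p_m\})$ is a ``tail'' of the psod on $\cC_i$ compatible with the enumeration, so the levelwise statement is just the standard fact that a tail of a (pre)psod is admissible with the expected mutation triangle. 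Taking the limit over $I$, using Lemma \ref{limitright} (and its left-adjoint analogue, Lemma \ref{limitleft}) to see that adjoints and the resulting triangles pass to the limit, gives the triangle in $\cC$. Running the induction down to $n=0$ yields $\cC^{\ge 0} = \cC$ with a finite filtration whose graded pieces are the $\cC_{p_n}$, which is exactly property $(3)$; combined with Proposition \ref{limit of psod} this completes the proof.

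The main obstacle I anticipate is the bookkeeping needed to show that the levelwise truncation triangles are genuinely functorial \emph{and} compatible across the diagram $I$ — i.e. that for each $f\colon i \to j$ the square relating the truncation at level $i$ to that at level $j$ commutes up to coherent homotopy, so that one may form the honest homotopy limit. This is where the Beck--Chevalley conditions built into the definition of ordered functor are used in an essential way, and where one must be slightly careful that ``homotopy limit of a diagram of exact triangles is an exact triangle'' applies — which it does, since exact triangles in $\dgCat$ (split exact sequences) are detected levelwise and fiber sequences commute with limits. A secondary, more cosmetic point is checking that $\varprojlim_i \bigoplus_{z \in \phi_i^{-1}(w)} \cC_{i,z}$ really is non-zero for each $w \in P$ (part of property $(1)$), which follows since each $\cC_{i,z}$ is non-zero and the transition functors in the relevant subdiagram are, after the ordered-functor identifications, equivalences onto direct summands; but this was already handled in Proposition \ref{limit of psod}.
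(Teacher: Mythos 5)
Your proposal is right in outline and shares the paper's skeleton — reduce everything to the generation property (3) via Proposition \ref{limit of psod}, enumerate $P=\{p_0,\dots,p_m\}$ using Remark \ref{rem numbering}, and peel off factors from the top — but the implementation is genuinely different from the paper's. The paper argues object by object inside $\cC$: for a nonzero $A$ it forms the counit triangle $\iota_{p_m}r_{p_m}A \to A \to A_1$, then the analogous triangle for $A_1$ with respect to $\cC_{p_{m-1}}$, and so on; after finitely many steps the remainder $A_{m+1}$ satisfies $r_{w}(A_{m+1})\simeq 0$ for all $w\in P$, and then the ordered structure of the limit projections $\alpha_i\colon \cC\to\cC_i$ together with the hypothesis that each $(\cC_i,P_i)$ is an honest psod forces $\alpha_i(A_{m+1})\simeq 0$ for every $i$, hence $A_{m+1}\simeq 0$, so $A$ is an iterated cone of objects of the $\cC_w$. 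This uses only the single-piece adjoints already furnished by Proposition \ref{limit of psod}, and never needs admissibility of tails or limits of split exact sequences. Your categorical induction on the tails $\cC^{\ge n}$ can be made to work, but it costs two verifications the paper's route avoids, and you should make them explicit. First, the Beck--Chevalley condition in Definition \ref{fun psod} is stated only for single fibers $\phi^{-1}(x)$; to apply Lemma \ref{limitright} (and Lemma \ref{limitleft}) to the inclusion of a tail you must check, by a short induction on the truncation triangles, that Beck--Chevalley also holds for the truncation onto $\langle \cC_{i,z},\, z\in\phi_i^{-1}(\{p_n,\dots,p_m\})\rangle$. Second, do not conflate $\cC^{\ge n}=\langle \cC_{p_n},\dots,\cC_{p_m}\rangle$ with $\varprojlim_i \langle \cC_{i,z},\, z\in\phi_i^{-1}(\{p_n,\dots,p_m\})\rangle$: their agreement is exactly the content of the inductive step (at $n=0$ it is the theorem), so as written the conclusion ``running the induction down to $n=0$ yields $\cC^{\ge 0}=\cC$'' is circular unless the step is phrased as: every object of the levelwise-tail limit sits in a triangle whose outer terms lie in $\cC_{p_n}$ and in the smaller levelwise-tail limit, the triangle being assembled from the levelwise truncations because the truncation functors pass to the limit and finite (co)fibers in $\varprojlim_i\cC_i$ are computed componentwise. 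With those two points spelled out your argument closes and proves the same statement; the paper's version is simply leaner, replacing the limit-of-split-exact-sequences bookkeeping by the single observation that an object of the limit all of whose projections $r_x\alpha_i$ vanish is zero.
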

\begin{proof}
We use the same notations we  introduced in the proof of Proposition \ref{limit of psod}. In particular, Proposition \ref{limit of psod} yields a collection of subcategories 
$
\cC_w$ for $ w \in P 
$
satisfying properties $(1)$ and $(2)$ of a psod in Definition \ref{def psod}. We only need to show that these subcategories generate $\cC.$ 
 
 We denote by
 $
 \alpha_i\colon \cC \longrightarrow \cC_i
 $
 the universal functors from the limit category. Note that by construction, these are ordered functors.
 Using the directedness of $P$ we can choose a numbering of  its elements 
 $
 \{w_0, \ldots, w_m\}
 $
 having the property discussed in Remark \ref{rem numbering}. Recall also that for $w\in P$ we denote by $r_w\colon \cC\to \cC_w$ the right adjoint of the inclusion $\iota_w\colon \cC_w\to \cC$. 
Let us pick  a non-zero object $A \in \cC$, and show that it belongs to the subcategory $ \langle \cC_w \, , \, w \in P \rangle$. 
 Since $\cC_{w_m}$ is right-admissible, there is a triangle 
 $$
 \iota_{w_m}r_{w_m}A \longrightarrow A \longrightarrow A_{\cC_{w_m}^\bot}  
 $$
 where $A_{\cC_{w_m}^\bot}$ has the property that $r_{w_m}(A_{\cC_{w_m}^\bot}) \simeq 0.$ Now set 
 $A_1:=A_{\cC_{w_m}^\bot},$ and consider   the analogous triangle for $A_1$, using $\cC_{w_{m-1}}$ instead of $\cC_{w_m}$
 $$
 \iota_{w_{m-1}}r_{w_{m-1}}A_1 \longrightarrow A_1 \longrightarrow A_{\cC_{w_{m-1}}^\bot}. 
 $$
Note that $r_{w_{m-1}}(A_{\cC_{w_{m-1}}^\bot})\simeq 0,$ and also 
  $r_{w_m}(A_{\cC_{w_{m-1}}^\bot})\simeq 0$ as 
 \begin{itemize}
 \item $
  r_{w_m}( \iota_{w_{m-1}}r_{w_{m-1}}A_1)\simeq 0, 
  $
    because $\iota_{w_{m-1}}r_{w_{m-1}}A_1 \in \cC_{w_{m-1}} \subseteq \cC_{w_m}^{\bot}$, 
  \item and $r_{w_m}(A_1)\simeq 0,$ because by construction $A_1 \in \cC_{w_m}^{\bot}.$ 
  \end{itemize} 
Next we set $A_2:=A_{\cC_{w_{m-1}}^\bot},$ and we can iterate the  construction above, this time with respect to $\cC_{w_{m-2}}$.

  Since $P$ is finite, in this way we construct inductively 
 an object $A_{m+1} \in \cC$ having the property that 
 $
 r_{w_i}(A_{m+1})\simeq 0 $ \text{for all} $ 0 \leq i \leq m. 
 $
Since the functors $\alpha_i$ are ordered, this implies that $r_{x}\alpha_i(A_{m+1})\simeq 0$ in $\cC_{i,{x}}$ for every $i$ and every $x \in P_i$. As each of the categories $(\cC_i, P_i)$ is generated by the subcategories making up their psod-s,  this implies that 
$\alpha_i(A_{m+1}) \simeq 0$ in $\cC_i$ for all $i$ in $I.$ Thus $A_{m+1} \simeq 0.$  As a consequence $A$ can be realized as an iterated cone of objects lying in the subcategories $\cC_w$  and therefore it lies  in $\langle \cC_w \, , \, w \in P \rangle$, as we needed to show.
\end{proof}

\begin{remark}
\label{remsemifact} 
For all $i \in I,$ denote by 
$
\pi_i\colon P_i \rightarrow P $  and $ \alpha_i\colon \cC \rightarrow \cC_i
$ the  universal morphisms. 
Then it follows from the proof of  Proposition  \ref{limit of psod} that if $w$ is in $P,$ then $A \in \cC$ lies 
in $\cC_w \subseteq \cC$ if and only if, for all $i \in I$ the image
$\alpha_i(A)$ lies in the subcategory $\langle \cC_{i, x} \, , \,  x \in \pi_i^{-1}(w) \rangle \subseteq \cC_i.$ 
\end{remark}

\subsection{Gluing psod-s and conservative descent}
\label{gpacd}
A formalism for gluing semi-orthogonal decompositions along faithfully flat covers was proposed in  \cite[Theorem B]{bergh2017conservative}. The authors call their theory  \emph{conservative descent}. The proof given  
in  \cite{bergh2017conservative} depends on rather subtle arguments. The key difference with our approach is that in that paper, the authors work with the classical theory of triangulated categories, for which there is no well-behaved notion of limits and colimits. Using the full power of the $\infty$-category of dg categories we can sidestep these  difficulties, and give a simple and conceptual proof  of conservative descent. From our perspective, conservative descent becomes a special case of the general structure result for limits of categories equipped with a psod given by Theorem \ref{limitpsod2}. 

More precisely, we will show that our Proposition \ref{limit of psod} and Theorem \ref{limitpsod2} imply Theorem B from \cite{bergh2017conservative}. 
We start by briefly recalling the setting of \cite{bergh2017conservative}, referring the reader to  the original reference for full details. Let $S$ be an algebraic stack. Let $X$ and $Z_1, \ldots, Z_m$ be algebraic stacks over $S$, and let  $S' \to S$ a faithfully flat map. If $T$ is an algebraic stack, we denote by $ \mathrm{D}_{\mathrm{qc}}(T)$ the classical derived category of quasi-coherent sheaves over $T$. Note that the category 
 $ \mathrm{D}_{\mathrm{qc}}(T)$ is the homotopy category of 
 $\Qcoh(T)$.

 The set-up of 
 Theorem B requires to consider
 \begin{itemize} 
 \item functors $\Phi_i\colon \mathrm{D}_{\mathrm{qc}}(Z_i) \to  \mathrm{D}_{\mathrm{qc}}(X)$ which are of \emph{Fourier-Mukai} type  (in the sense of Definition 3.3 of \cite{bergh2017conservative}),
 \item and their \emph{base change} along $S' \to S$. If we set $X' = X \times_S S'$, and $Z'_i = Z_i \times_S S' $, then the base change of $\Phi_i$ is a functor $\Phi_i'\colon\mathrm{D}_{\mathrm{qc}}(Z'_i) \to  \mathrm{D}_{\mathrm{qc}}(X')$. 
 \end{itemize} 
Then Theorem B breaks down as the following two statements: 
\begin{enumerate} 
\item If the functors $\Phi_i'$ are fully-faithful, then the functors $\Phi_i$ are also fully-faithful. Under this assumption, if the subcategories  $\mathrm{Im}(\Phi_i')$ are semi-orthogonal in $\mathrm{D}_{\mathrm{qc}}(X')$,  
$$
\text{i.e.} \quad \mathrm{Im}(\Phi_j') \subset \mathrm{Im}(\Phi_{j'}')^\bot \quad \text{if} \quad j < j', \quad
$$
then   $\mathrm{Im}(\Phi_i)$ are  semi-orthogonal in $\mathrm{D}_{\mathrm{qc}}(X)$. 
\item Moreover if the subcategories 
$\mathrm{Im}(\Phi_i')$ generate $\mathrm{D}_{\mathrm{qc}}(X')$, then the subcategories 
$\mathrm{Im}(\Phi_i)$ generate $\mathrm{D}_{\mathrm{qc}}(X)$. 
 \end{enumerate}
 Let us sketch how to recover these results from Proposition \ref{limit of psod} and Theorem  \ref{limitpsod2}. We will break this explanation down into several steps. For the sake of   clarity we will gloss over some technical details, which will be left to the reader. 
  \begin{itemize}[leftmargin=*]
 \item We set $\cC:= \Qcoh(X)$. For all $k \in \mathbb{N}$, we denote by $\cC_k$ the dg-category of quasi-coherent sheaves over the  $k$-th iterated fiber product of $X$ and $S'$ over $S$
 $$
 \cC_k := \Qcoh(X \times_{S} S' \times_{S}  \ldots  \times_{S} S').
 $$
\item For all $k \in \mathbb{N}$  we denote by $\cC_{k, i}$ the dg-category of quasi-coherent sheaves over the  $k$-th iterated fiber product of $Z_i$ and $S'$ over $S$
 $$
 \cC_{k, i} := \Qcoh(Z_{i} \times_{S} S' \times_{S}  \ldots  \times_{S} S').  
 $$
 \item Since $\Phi_i$ is of Fourier--Mukai type it lifts to a functor between the dg-enhancements of the derived categories of quasi-coherent sheaves. We keep denoting these functors $\Phi_i$ 
  $$
 \Phi_i\colon \Qcoh(Z_i) \to \Qcoh(X).
 $$
By base change for all $k \in \mathbb{N}$  we get functors 
 $\, \Phi_{k, i}\colon \cC_{k, i} \to \cC_k.$  
 \item As in Theorem B from \cite{bergh2017conservative} we assume that for all $i \in \{1, \ldots, m\}$ $\Phi_{1,i}=\Phi'_i$ is fully-faithful, and that 
 $\cC_{1,i}\simeq \mathrm{Im}(\Phi'_i)$ are semi-orthogonal in $\cC_1=\Qcoh(X')$. Equivalently, this    can be formulated  by saying that the subcategories $\cC_{1,i}$ form a pre-psod of 
  $\cC_1$ of type 
 $P$, where $P$ is the set $\{1, \ldots, m\}$ with the usual ordering. One can show that this implies that, for all $k \in \mathbb{N}$, the subcategories $ \cC_{k, i}$ form a pre-psod on 
 $\cC_k$ of type 
 $P$.  
  \item By faithfully flat descent,  $\Qcoh(X)$ can be obtained   as the 
 limit 
\begin{equation}
\label{cechnerve}
\Qcoh(X) \longrightarrow \big [\Qcoh(X' )    
\substack{\longrightarrow\\[-0.6em]   \\[-1em] \longrightarrow } \Qcoh(X' \times_S S')  \substack{\longrightarrow\\[-1em] \longrightarrow \\[-1em] \longrightarrow } \   \Qcoh(X' \times_S S' \times_S S' ) \ldots \big ]. 
\end{equation}
In formulas we will write 
$ \, \cC = \varprojlim_{k \in \mathbb{N}} \cC_k \,$. For every $k \in \mathbb{N}$ we denote by 
$$a_{k, 1}, \ldots, a_{k, k+1}\colon \cC_k \rightarrow \cC_{k+1}$$
the $k+1$ structure maps from diagram (\ref{cechnerve}).

Let $k\in \mathbb{N}$, and let $j \in \{ 1, \ldots, k+1\}$. 
It is easy to see that the identity $\, \mathrm{id}\colon P  \to P$ is an order-reflecting map and  equips $a_{k,j}\colon (\cC_k, P) \to (\cC_{k+1},P)$ with a structure of ordered functor.    
\item Proposition \ref{limit of psod} then implies that $ \, \cC = \varprojlim_{k \in \mathbb{N}} \cC_k \,$ carries a pre-psod of type    $$\varinjlim_{k \in \mathbb{N}}P=P$$    with semi-orthogonal factors given by 
$\varprojlim_k\cC_{k, i} \simeq \Qcoh(Z_i)$. This recovers statement $(1)$ of Theorem B. Now assume that $\cC_{1,i}$ is actually a psod of $\cC_1=\Qcoh(X')$. This implies that for all $k\in \bN$, $\cC_{k, i}$ is a psod of type $P$ of $\cC_k$.
Then Theorem \ref{limitpsod2} implies statement $(2)$ of Theorem B.\footnote{Proposition \ref{limit of psod} and Theorem \ref{limitpsod2} were formulated for \emph{small} dg-categories, while here we are applying them to the \emph{large} category $\Qcoh(X)$: however our results also hold, without variations, in the setting of large categories.}
 \end{itemize}
 
\section{Semi-orthogonal decompositions  of root stacks}
\label{sec:sodros}
In this section we explain our main application. Let $X$ be an algebraic stack and let $D$ be a normal crossings divisor in $X$. 
The root stack $\radice[r]{(X,D)}$ of a pair $(X, D)$, where $D \subset X$ is a normal crossings divisor, has long been an important object in algebraic geometry. We refer the reader to the Introduction for more information on previous work in this area. We  will construct semi-orthogonal decompositions on 
$\Perf(\radice[r]{(X,D)})$. This generalizes earlier results by other authors. In \cite{ishii2011special} and \cite{bergh2016geometricity} the authors construct sod-s on $\Perf(\radice[r]{(X,D)})$, under the assumption that $D$ is \emph{simple} normal crossings.   We drop the assumption of simplicity and work with general normal crossings divisors.

\begin{remark}
We want to stress a subtle point about this assumption: from \cite[Definition 3.5]{bergh2016geometricity}, it might seem that in that paper they \emph{do} consider divisors that are merely normal crossings.

The point is that the root construction that they use in the non-simple case is not the ``correct'' one from the point of view of logarithmic geometry. For instance, if $D\subset \bP^2$ is an irreducible nodal cubic, their $r$-th root construction would add a stabilizer $\mu_r$ along all the points of $D$, including the node. This does not take into account that locally around the node there are two distinct branches of $D$. In this case, the ``correct'' construction of $\radice[r]{(\bP^2,D)}$ (and more generally when $D$ is normal crossings but not simple) from our point of view is the one introduced in \cite{borne-vistoli}, that adds a stabilizer $\mu_r$ along all the points of $D$ different from the node, and a stabilizer $\mu_r^2$ at the node. A way to see that this is indeed the right notion is that, following the definition of \cite{borne-vistoli}, the stack $\radice[r]{(\bP^2,D)}$ is smooth, just as the root stacks of smooth schemes along simple normal crossings divisors are; whereas with the definition of root stacks  given in \cite[Definition 3.5]{bergh2016geometricity}, in the non-simple normal crossings case one obtains singular stacks. 
\end{remark}

 The shape of the sod-s that we construct in the general case have  interesting differences from  the ones given in  \cite{ishii2011special} and \cite{bergh2016geometricity}. Indeed whereas in the simple normal crossings case the factors of the sod-s are equivalent to perfect complexes on the strata, in the general case we need to work with the \emph{normalizations} of the strata.

Our construction of psod-s for $\Perf(\radice[r]{(X,D)})$  relies in essential way on Theorem \ref{limitpsod2}. The other main ingredient are the psod-s  obtained in  \cite{bergh2016geometricity} in the simple normal crossing case,  and which will be reviewed in a slightly different formulation in Section \ref{secsimple} below. 

\subsection{Root stacks of normal crossing divisors}
\label{secsimple}
We start by introducing some notations. Let $X$ be an algebraic stack and let $D \subset X$ be a (non-necessarily simple) normal crossing divisor. Note that $X$ carries a natural stratification given by locally closed substacks which can be obtained, locally, as intersections of the branches of $D$. Equivalently this  stratification can be defined as follows: let $\widetilde{D}$ be the normalization of $D$ and consider the locally closed substacks $S$ which are maximal with respect to the following two properties: $S$ is connected;  the map $\widetilde{D} \times_X S \to S$ is \'etale. 

Let $\cS(D)$ denote the set of \emph{strata closures}, that we will henceforth simply call ``strata''.\footnote{In the usual definition, strata are locally closed, and their boundary is a disjoint union of smaller dimensional strata. We look instead at closed strata, which are given by the closures of the locally closed ones.} For every $k \in \mathbb{N}$ we set 
$$\cS(D)_k := \{S \in \cS(D) \mid \mathrm{codim}(S)=k\}, \quad \text{and} \quad S(D)_k := \bigcup_{S \in \cS(D)_k} S.
$$
We say that $S(D)_k$ is the \emph{$k$-codimensional skeleton} of the stratification.  For every stratum $S \in \cS(D)$ we denote by $\widetilde{S}$ its 
normalization. 
We denote by $\widetilde{S(D)}_k$ the \emph{normalization} of $S(D)_k$:  $\widetilde{S(D)}_1$ is the disjoint union of the \emph{normalizations} of the irreducible components of 
$D$; more generally, $\widetilde{S(D)}_k$ is the disjoint union of the \emph{normalizations} of the strata in $\cS(D)_k$. In formulas we can write
$$
\widetilde{S(D)}_k = \coprod_{S \in \cS(D)_k} \widetilde{S}.
$$
Let $N_D$ be the maximal codimension of  strata of $(X, D)$. We denote by 
$(\underline{N_D}, \leq)$ the  ordered set
\[
\underline{N_D} := \{N_D, N_D - 1, \ldots, 0 \} \quad \text{ordered by} \quad 
N_D < N_D - 1 < \ldots < 0.
\]
We set $\underline{N_D}^*=\underline{N_D}-\{0\}$. 
For $r\in \mathbb{N}$, we denote by $\mathbb{Z}_r$ the group of residue classes modulo $r$ and set $\mathbb{Z}_r^*:=\mathbb{Z}_r - \{0\}$. As in \cite{SST2},  it is useful to 
identify  
$\mathbb{Z}_r$ and $\mathbb{Z}_r^*$ with subsets of $\mathbb{Q}/\mathbb{Z}  = \mathbb{Q} \cap (-1,0]$ 
\begin{equation}
\label{eqpreorder1}
\mathbb{Z}_r \cong \left\{-\frac{r-1}{r}, \hdots, -\frac{1}{r}, 0\right\} \subset \mathbb{Q} \cap (-1,0], \quad  
\mathbb{Z}_r^* \cong \left\{-\frac{r-1}{r}, \hdots, -\frac{1}{r}\right\} \subset \mathbb{Q} \cap (-1,0].
\end{equation}
We equip $\mathbb{Z}_r$  and $\mathbb{Z}_r^*$ with the order $\leq$ given by the restriction of the order on $\mathbb{Q}$  
$$-\frac{r-1}{r} < -\frac{r-2}{r} < \hdots < -\frac{1}{r} < 0.$$
For all $k \in \underline{N_D}^*$ we set 
$ \,  \mathbb{Z}_{k, r} := \bigoplus_{i=1}^{k} \mathbb{Z}_{r} \, $ and $ \, 
\mathbb{Z}_{k, r}^* := \bigoplus_{i=1}^{k} \Big ( \mathbb{Z}_{r} - \{0\} \Big )\, $. Note that $\mathbb{Z}_{0, r}^*=\{0\}$. We equip  $\mathbb{Z}_{k, r}$  and $ 
\mathbb{Z}_{k, r}^*$ with the product preorder. The group 
$\mathbb{Z}_{k, r}$ is canonically isomorphic to the group of characters of the group of roots of unity $\mu_{k, r}:=\bigoplus_{i=1}^{k} \mu_{r}$.

We denote by $(\mathbb{Z}_{D,r}, \leq)$ the   set 
$ 
\coprod_{k=0}^{N_D} \mathbb{Z}_{k, r}^*
$ equipped with the following preorder: let 
 $\xi, \xi'$ be in $\mathbb{Z}_{D,r}$, with $\xi \in \mathbb{Z}_{k, r}^*$ and 
$\xi' \in \mathbb{Z}_{k', r}^*$, then  
\[
\xi < \xi' \text{ if  }  k <_{\underline{N_D}}k' \text{  or   }  k = k'     \text{  and   }  \xi <_{\mathbb{Z}_{k, r}^*} \xi'.
\]
Let $\radice[r]{(X,D)}$ be the $r$-th root stack of $(X,D)$. 
Note that $\radice[r]{(X,D)}$ also carries a natural normal crossing divisor, which we denote $D_r \subset \radice[r]{(X,D)}$, obtained as reduction of the preimage of $D$. All previous notations and definitions therefore also apply to the log pair $(\radice[r]{(X,D)}, D_r)$.

\begin{proposition}
\label{prop: sodvnc}
\begin{enumerate}[leftmargin=*] 
\item The category $\Perf(\radice[r]{(X,D)})$ has a  psod
 of type $(\underline{N_D}, \leq)$,  
  $\, 
\Perf(\radice[r]{(X,D)}) = \langle \cA_k, k \in \underline{N_D} \rangle 
\,$ 
where:
\begin{itemize}[leftmargin=*]
\item $\cA_{ 0}\simeq \Perf(X)$.
\item  For all $k \in  \underline{N_D}^*$, the subcategory $\cA_k$  has a psod of type $(\mathbb{Z}_{k, r}^*, \leq)$ 
$$
\cA_k = \langle  \cA_{ \chi}^k, \chi \in (\mathbb{Z}_{k,r}^*, \leq)  \rangle
$$ 
and, for all $\chi \in \mathbb{Z}_{k, r}^*$, there is an equivalence $\cA_{ \chi}^k  \simeq \Perf(\widetilde{S(D)_k})$.
\end{itemize}
\item The  category $\Perf(\radice[r]{(X,D)})$ has a psod of type $(\mathbb{Z}_{D,r}, \leq)$ 
$$\Perf(\radice[  r]{(X,D)})=  \langle  \cA_\chi^k, \chi \in  (\mathbb{Z}_{D,  r}, \leq)   \rangle,$$
where $\cA_{ 0}\simeq \Perf(X)$ and for all $\chi \in \mathbb{Z}_{k, r}^*$ there is an equivalence $\cA_{ \chi}^k \simeq \Perf(\widetilde{S(D)}_k).$
\end{enumerate}
\end{proposition}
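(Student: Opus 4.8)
The plan is to reduce the statement to the simple normal crossing (snc) case --- for which it holds by \cite{bergh2016geometricity} and \cite{SST2} and which is recalled in Section~\ref{secsimple} --- and then to glue along an \'etale cover by means of Theorem~\ref{limitpsod2}. First I would choose an \'etale cover $U\to X$ over which $D$ becomes snc, let $U^\bullet$ denote its \v{C}ech nerve, write $U^{(n)}$ for the $(n+1)$-fold fiber power, and let $D_n\subset U^{(n)}$ be the restriction of $D$; this $D_n$ is again snc, since being snc is stable under \'etale pullback. \'Etale maps preserve codimension, so $N_{D_n}=N_D$, and normalization commutes with \'etale base change, so that $\widetilde{S(D_n)}_k=\widetilde{S(D)}_k\times_X U^{(n)}$ --- i.e.\ the $\widetilde{S(D_n)}_k$ constitute the \v{C}ech nerve of an \'etale cover of $\widetilde{S(D)}_k$. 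Moreover $\radice[r]{(U,D|_U)}\to\radice[r]{(X,D)}$ is a faithfully flat (indeed \'etale) cover whose \v{C}ech nerve is $n\mapsto\radice[r]{(U^{(n)},D_n)}$, so by faithfully flat descent for $\Perf$ we have $\Perf(\radice[r]{(X,D)})\simeq\varprojlim_n\Perf(\radice[r]{(U^{(n)},D_n)})$.

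Next I would place on each term the snc psod of type $(\underline{N_D},\leq)$ recalled in Section~\ref{secsimple}. The technical heart of this step is to check that the cosimplicial structure functors --- pullbacks along the \'etale projections $U^{(n+1)}\to U^{(n)}$ --- are ordered functors in the sense of Definition~\ref{fun psod}, with order-reflecting map $\mathrm{id}\colon\underline{N_D}\to\underline{N_D}$: the factor $\cA_k$ is cut out by the codimension stratification and is therefore stable under flat base change (this gives the strictly commutative square), while the Beck--Chevalley conditions for $r_k$ and $l_k$ are precisely flat base change for the projection/restriction functors realizing those adjoints. The associated diagram $n\mapsto\underline{N_D}$ has identity transition maps, hence (being a colimit of a constant diagram) colimit $\underline{N_D}$, which is finite and totally ordered, so in particular directed. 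Theorem~\ref{limitpsod2} then yields a psod $\Perf(\radice[r]{(X,D)})=\langle\cA_k,\,k\in\underline{N_D}\rangle$ with $\cA_k\simeq\varprojlim_n(\cA_k)^{(n)}$; for $k=0$ this reads $\cA_0\simeq\varprojlim_n\Perf(U^{(n)})\simeq\Perf(X)$.

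It remains to equip $\cA_k$, for $k>0$, with a psod of type $(\mathbb{Z}_{k,r}^*,\leq)$ all of whose factors are $\simeq\Perf(\widetilde{S(D)}_k)$. One cannot simply reapply Theorem~\ref{limitpsod2}, since $(\mathbb{Z}_{k,r}^*,\leq)$ is a product of $k$ chains and is not directed for $k\geq 2$ --- and a homotopy limit of categories each generated by its semi-orthogonal factors need not itself be so generated when the index preorder is not directed (Example~\ref{rem inf prod}). Instead I would use the formulation of the snc psod in Section~\ref{secsimple} in which $(\cA_k)^{(n)}$ appears as an external tensor product $\Perf(\widetilde{S(D_n)}_k)\otimes\cE_k$, where $\cE_k$ is a \emph{fixed}, dualizable dg-category --- independent of $n$ and of the geometry, morally the codimension-$k$ ``corner part'' of $\Perf(\radice[r]{(\mathbb{A}^k,\{x_1\cdots x_k=0\})})$ --- carrying a psod of type $(\mathbb{Z}_{k,r}^*,\leq)$ all of whose factors are $\simeq\Perf(\kappa)$. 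Since $\cE_k$ is dualizable, $-\otimes\cE_k$ commutes with the homotopy limit over $U^\bullet$, so
\[
\cA_k\;\simeq\;\varprojlim_n\bigl(\Perf(\widetilde{S(D_n)}_k)\otimes\cE_k\bigr)\;\simeq\;\Bigl(\varprojlim_n\Perf(\widetilde{S(D_n)}_k)\Bigr)\otimes\cE_k\;\simeq\;\Perf(\widetilde{S(D)}_k)\otimes\cE_k,
\]
the last equivalence by \'etale descent for $\Perf$. Tensoring the psod of $\cE_k$ with $\Perf(\widetilde{S(D)}_k)$ --- an operation that preserves admissibility, semi-orthogonality and generation --- produces the desired psod of $\cA_k$, each factor being $\Perf(\widetilde{S(D)}_k)\otimes\Perf(\kappa)\simeq\Perf(\widetilde{S(D)}_k)$. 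This establishes $(1)$; and $(2)$ follows from $(1)$ via the elementary observation (used already in \cite{SST2}) that a psod of type $P$ whose factors themselves carry psod-s of types $Q_p$ is the same datum as a psod of type $\coprod_{p}Q_p$ equipped with the lexicographic preorder --- and here, with $P=\underline{N_D}$, $Q_0=\{0\}$ and $Q_k=\mathbb{Z}_{k,r}^*$, that coproduct is exactly $(\mathbb{Z}_{D,r},\leq)$.

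The main obstacle is this last step: the gluing theorem is powerless for the finer decomposition of $\cA_k$, precisely because $(\mathbb{Z}_{k,r}^*,\leq)$ is not directed, so the decomposition must instead be imported from the fixed dualizable factor $\cE_k$, on which it exists over an arbitrary base and requires no gluing at all. The real work is therefore to verify that the snc psod-s of Section~\ref{secsimple} can genuinely be put in this external-product form --- constant in the ``character'' variable --- compatibly with the \v{C}ech structure maps; by comparison, the Beck--Chevalley and flat-base-change verifications needed in the first two steps are routine.
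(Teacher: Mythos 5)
Your first two steps (étale cover trivializing $D$ to snc, \v{C}ech nerve, descent for $\Perf$, ordered structures given by the identity map, one application of Theorem~\ref{limitpsod2}) match the paper's strategy. But the proposal breaks down at exactly the point you yourself flag as ``the real work'': the claim that the snc psod of Section~\ref{secsimple} can be written in external-product form $(\cA_k)^{(n)}\simeq \Perf(\widetilde{S(D_n)}_k)\otimes\cE_k$ with a \emph{fixed}, dualizable ``corner category'' $\cE_k$, compatibly with the \v{C}ech maps. This is never established, and it is not a routine repackaging of \cite{bergh2016geometricity} or \cite{SST2}: the factors $\cA^k_\chi$ are the images of $\iota_*q^*$ applied to the character summands of $\Perf(G_{k,r}(D))$, and the Hom-complexes \emph{between} different character factors inside $\Perf(\radice[r]{(X,D)})$ are computed via $\iota^!\iota_*$, hence involve the (co)normal data of the strata (twists by $\cO(-D_i)|_{\widetilde S}$ and the gerbe class). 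These gluing bimodules are genuinely geometric and not constant in the base, so the asserted Künneth-type splitting of $\cA_k$ with a universal $\cE_k$ is at best unproven and quite possibly false as stated; the subsequent points (dualizability of $\cE_k$, commutation of $-\otimes\cE_k$ with the homotopy limit, transport of the psod) all hang on it. As it stands this is a genuine gap, not a deferral of a routine verification.

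The paper avoids this entirely by proving part $(2)$ \emph{first}: it equips each $\Perf(\radice[r]{(U_l,D_l)})$ with the full fine psod of type $\mathbb{Z}_{D_l,r}=\mathbb{Z}_{D,r}$, checks that the identity of $\mathbb{Z}_{D,r}$ gives the pullbacks an ordered structure --- strict commutativity via flat base change for $\iota_*q^*$, and the Beck--Chevalley condition for $r_\chi=\mathrm{pr}_\chi\circ q_*\iota^!$ via proper and flat base change --- and then applies Theorem~\ref{limitpsod2} once, obtaining each fine factor as $\varprojlim_l\Perf(\widetilde{S(D_l)}_k)\simeq\Perf(\widetilde{S(D)}_k)$ by descent; part $(1)$ is then an immediate regrouping, in the direction opposite to your deduction of $(2)$ from $(1)$. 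Your objection that $(\mathbb{Z}_{k,r}^*,\leq)$ with the product preorder is not directed is a fair reading of Definition~\ref{defdirected}, but the correct response is not to abandon the gluing theorem for the character-level decomposition: the same-codimension characters that are incomparable cause no obstruction to the generation argument once the ordered-functor conditions are verified factorwise as above (and the index preorder can be refined accordingly), which is how the paper proceeds. If you want to salvage your route, you must either prove the external-product claim with all the stated compatibilities, or revert to gluing the fine decomposition directly as the paper does.
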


Before giving a proof of Proposition \ref{prop: sodvnc} we will make a few preliminary considerations.

If $D \subset X$ is a \emph{simple normal crossing} divisor, 
Proposition \ref{prop: sodvnc}  is a reformulation of  Theorem 4.9 from 
\cite{bergh2016geometricity}. In order to translate back to the statement of \cite[Theorem 4.9]{bergh2016geometricity}, it is sufficient to note that $\widetilde{S_k}$ decomposes as the disjoint union of strata of codimension $k$.  Indeed, in the simple normal crossing setting, all strata are already normal. This yields an equivalence 
$$
\cA_{ \chi}^k \simeq \bigoplus_{S \in \cS_k} \Perf(S)
$$
which recovers the semi-orthogonal factors given by \cite[Theorem 4.9]{bergh2016geometricity}.

It will be useful to explain how the semi-orthogonal factors 
$\cA_\chi^k$ are constructed in the simple normal crossing case, and some of their basic properties. We refer to the treatment contained in Sections 3.2.1 and 3.2.2  of  \cite{SST2}, and limit ourselves to statements without proof. For all $k \in \mathbb{N}$ there is a canonical $\mu_{k, r}$-gerbe $G_{k,r}(D) \to \widetilde{S(D)}_k$, that fits in a diagram 
$$
G_{k,r}(D) \stackrel{q} \longleftarrow \widetilde{S(D_r)}_k \stackrel{\iota}  \longrightarrow \Perf(\radice[r]{(X,D)}) $$
where $\widetilde{S(D_r)}_k$ is the normalization of the codimension $k$ skeleton of the stratification of $(\radice[r]{(X,D)}, D_r)$. 
There is a natural splitting (as in Lemma 3.9 of \cite{SST2})
$$
\Perf(G_{k,r}(D)) \simeq \bigoplus_{\chi \in \mathbb{Z}_{k, r}} \Perf(G_{k,r}(D))_\chi 
 \simeq \bigoplus_{\chi \in \mathbb{Z}_{k, r}} \Perf(\widetilde{S(D)}_k).
$$
For every $\chi \in \mathbb{Z}_{k, r}$, the subcategory 
$\cA^ k_\chi$ is defined as the image of  the composite
\begin{equation}
\label{semifactorseq}
  \cA^ k_\chi  = \Perf(\widetilde{S(D)}_k)  \simeq \Perf(G_{k, r}(D))_\chi  \xrightarrow{(\star)}\Perf(G_{k, r}(D)) \xrightarrow{\iota_{*}q^*} \Perf(\radice[r]{(X,D)})  
\end{equation}
where  arrow $(\star)$ is the inclusion of the  
$\chi$-th factor.

 \begin{proof}[Proof of Proposition \ref{prop: sodvnc}] 
We will use the fact that, by
\cite[Theorem 4.9]{bergh2016geometricity},  the statement holds when $D$ is  simple normal crossing.

Let $D \subset X$ be a general normal crossing divisor. 
Consider  an   \'etale covering $U \to X$ such that the pull-back to $U$ of the log structure of $(X, D)$ is induced by a  
 simple normal crossings divisor 
$D|_U$.   
The construction of root stacks is compatible with base change, so that there are natural isomorphisms $\radice[r]{(U,D|_{U})} \simeq \radice[r]{(X,D)}\times_X U$.  Further,   $\radice[r]{(U,D|_{U})} \to \radice[r]{(X,D)}$ is an \'etale covering. For all $l \in \mathbb{N}$ we denote the $l$-fold iterated fiber product of $U$ over $X$ 
$ \, \, 
U_{l}:= U  \times_X  \ldots 
\times_X U  \, \,.$  For all $l \in \mathbb{N}$ the pull-back  of the log structure 
on $(X, D)$  to $U_l$ is also induced 
by a simple normal crossing divisor, which we denote $D_l$.

Consider the semi-simplicial stack given by the nerve of the \'etale cover $ \radice[r]{(U,D|_U)}  \to \radice[r]{(X,D)}$ 
\[
\ldots \substack{\longrightarrow\\[-1em] \longrightarrow \\[-1em] \longrightarrow } \, \, \radice[r]{(U_2,D_2)}   \rightrightarrows \radice[r]{(U,D|_{U})}  \longrightarrow \radice[r]{(X,D)}. 
\]
Note that for all $l$, the structure morphisms $p_1, \ldots, p_{l+1}$ 
 $$
p_1, \ldots, p_{l+1}\colon \radice[r]{(U_{l + 1},D_{l + 1})} \longrightarrow 
\radice[r]{(U_{l },D_{l})} 
 $$
 map strata of codimension $k$ to strata of codimension $k$. By faithfully flat descent, we can realize $\Perf(\radice[r]{(X,D)})$ as the totalization of the induced semi-cosimplicial diagram of dg-categories, where the structure maps are given by pull-back functors: 
\begin{equation}
\label{limitofcat}
\Perf(\radice[r]{(X,D)}) \simeq \varprojlim_{l \in \mathbb{N}} \Perf \left(\radice[r]{(U_{  l},D|_{U_{ l}})}  \right). 
\end{equation}
We are going to prove the proposition by applying Theorem \ref{limitpsod2} to the limit of dg-categories (\ref{limitofcat}), more precisely:
\begin{itemize} 
\item Since $D_l$ is simple normal crossing, for every $l$ we can equip $\Perf(\radice[r]{(U_{ l},D|_{U_{ l}})})$ with the psod of type $\mathbb{Z}_{D_l,  r}$ given by Proposition \ref{prop: sodvnc}, where the semi-orthogonal factors are defined as in (\ref{semifactorseq}). Also, since $\mathbb{Z}_{D_l,  r} = \mathbb{Z}_{D,  r}$, we can write 
$$
\Perf(\radice[r]{(U_{l},D_l)} ) = \langle   \cA_\chi^{l, k}, \chi \in  (\mathbb{Z}_{D,  r}, \leq)   \rangle.
$$
\item We equip the functors appearing in   limit  (\ref{limitofcat}) with the ordered structure given by the identity map: that is, for every $j \in \{1, \ldots, l+1\}$ we have 
$$
p_j^*\colon \Perf(\radice[r]{(U_{l },D_{l})}) \longrightarrow 
 \Perf(\radice[r]{(U_{l + 1},D_{l + 1})} )
$$
and we set $\phi_{p_j^*}=\mathrm{id}\colon \mathbb{Z}_{D,  r} \to \mathbb{Z}_{D,  r}.$
\end{itemize}
Note that $\varinjlim{\mathbb{Z}_{D_l,r}}=\mathbb{Z}_{D,  r}$ is a finite preorder. Thus, to apply Theorem \ref{limitpsod2}, we only need to check  
 that the identity map $\mathrm{id}\colon \mathbb{Z}_{D,  r} \to \mathbb{Z}_{D,  r}$ does indeed induce a well-defined ordered structure on the functors appearing in (\ref{limitofcat}). Namely, we have to prove the following two properties: 
\begin{enumerate}[leftmargin=*]
\item[$(a)$]  We need to show that for all 
$j \in \{1, \ldots, l+1\}$, for all $\chi \in \mathbb{Z}_{D,  r}$ there is a strictly commutative diagram 
\begin{equation}
\begin{gathered}
\label{eqorderedid}
\xymatrix{\cA_\chi^{l+1, k} \ar[r] & 
 \Perf(\radice[r]{(U_{l + 1},D_{l + 1})} )\\
\cA_\chi^{l, k} \ar[r] \ar[u]^-{p_j^*} \ocell{ur} & \Perf(\radice[r]{(U_{l },D_{l})}).  \ar[u]_-{p_j^*} }
\end{gathered}
\end{equation}

\item[$(b)$] Additionally we need to show that the canonical 2-cells obtained  via adjunction from (strictly) commutative diagram (\ref{eqorderedid}) are also invertible, giving rise to commutative diagrams 
\begin{equation}
\begin{gathered}
\label{leftrightadjoints}
\xymatrix{\cA_\chi^{l+1, k} & 
 \Perf(\radice[r]{(U_{l + 1},D_{l + 1})} ) \ar[l]_-{l_\chi} && \cA_\chi^{l+1, k}   & 
 \Perf(\radice[r]{(U_{l + 1},D_{l + 1})} ) \ar[l]_-{r_\chi} \\
\cA_\chi^{l, k}  \ar[u] & \Perf(\radice[r]{(U_{l },D_{l})})  \ar[u]_-{p_j^*} \ar[l]_-{l_\chi} && \cA_\chi^{l, k}  \ar[u] & \Perf(\radice[r]{(U_{l },D_{l})}).  \ar[u]_-{p_j^*} \ar[l]_-{r_\chi}}
\end{gathered}
\end{equation}
\end{enumerate}
Let us start with property $(a)$. The horizontal arrows in (\ref{eqorderedid}) are inclusions, thus we only need to check that $p_j^*$ maps $\cA_\chi^{l, k}$ to the subcategory $\cA_\chi^{l+1, k}$. Note that 
the map  
$$p_j\colon \radice[r]{(U_{l + 1},D_{l + 1})} \to 
\radice[r]{(U_{l },D_{l})}$$ maps strata to strata. Thus we get a commutative diagram 
$$
\xymatrix{
G_{k,r}(D_{l+1}) \ar[d]_-{p_j} & \ar[l]_-{q}  \widetilde{S(D_{l+1, r})}_k \ar[r]^-{\iota}  \ar[d]^-{p_j} & \Perf(\radice[r]{(U_{l+1},D_{l+1})}) \ar[d]^-{p_j} \\
G_{k,r}(D_{l}) & \ar[l]_-{q}  \widetilde{S(D_{l, r})}_k \ar[r]^-{\iota}  & \Perf(\radice[r]{(U_{l},D_{l})}) }
$$
where all vertical arrows are \' etale (and in particular flat and proper), and both  the left and the right square are fiber products. 
Property $(a)$  follows because there is a natural equivalence  
$$
\iota_*q^*p_j^*  \simeq p_j^*\iota_*q^*\colon G_{k,r}(D_{l}) \longrightarrow \Perf(\radice[r]{(U_{l+1},D_{l+1})})
$$
given by the   composite   
$$
\iota_*q^*p_j^* \simeq \iota_*p_j^*q^* \stackrel{(\star)}\simeq p_j^*\iota_*q^*  
$$
 where equivalence $(\star)$ is given by flat base change.

 Let us consider property $(b)$ next. We will show that the Beck--Chevalley property holds with respect to right adjoints: that is, that the right square in (\ref{leftrightadjoints}) commutes. The case of left adjoints is similar. Note that 
 $ 
 r_\chi\colon
 \Perf(\radice[r]{(U_{l + 1},D_{l + 1})} )  \to \cA_\chi^{l+1, k}  
$ 
is given by the composite
$$
 \Perf(\radice[r]{(U_{l+1},D_{l+1})}) \xrightarrow{q_*\iota^ !}  
G_{k,r}(D_{l+1}) \xrightarrow{\mathrm{pr}_\chi} \cA_\chi^{l+1, k}, 
$$
where $\mathrm{pr}_\chi$ is the projection onto the $\chi$-th factor, and similarly for 
$  
r_\chi\colon \Perf(\radice[r]{(U_{l },D_{l})})  \to \cA_\chi^{l, k} 
$. Thus, in order to show that the right square in (\ref{leftrightadjoints}) commutes, it is enough to prove that the canonical natural transformation 
$$
q_*\iota^!p_j^* \Rightarrow p_j^*q_*\iota^!\colon \Perf(\radice[r]{(U_{l},D_{l})}) \longrightarrow  G_{k,r}(D_{l+1}) 
$$
is invertible. This can be broken down as a composite of the base change 
natural transformations 
$$
q_*\iota^!p_j^* \stackrel{(i)}\Rightarrow q_*p_j^*\iota^! \stackrel{(ii)}\Rightarrow p_j^*q_*\iota^!
$$
which are both equivalences: $(i)$ is an equivalence by proper base change 
($\iota^!p_j^* \simeq p_j^*\iota^!$), and $(ii)$ by flat base change ($q_*p_j^* \simeq p_j^*q_*$).

Since property $(a)$ and $(b)$ are satisfied we can apply Theorem \ref{limitpsod2} to our setting. Thus 
$\Perf(\radice[r]{(X,D)})$ carries a psod of type  $(\mathbb{Z}_{D,r}, \leq)$ 
$$\Perf(\radice[  r]{(X,D)})=  \langle  \cA_\chi^k, \chi \in  (\mathbb{Z}_{D,  r}, \leq)   \rangle$$
where for all $\chi \in \mathbb{Z}_{D,  r}$ we have that 
$$
\cA_\chi^k = \varprojlim_{l \in \mathbb{N}} \cA_\chi^{l,k} \simeq \varprojlim_{l \in \mathbb{N}} 
\Perf(\widetilde{S(D_l)}_k).
$$
Note that the stacks $\widetilde{S(D_l)}_k$, together with the structure maps between them, are the nerve of the \'etale cover 
$\widetilde{S(D_U)}_k \to \widetilde{S(D)}_k$. Thus, by faithfully flat descent, we have an equivalence
$$
\cA_\chi^k  \simeq \varprojlim_{l \in \mathbb{N}} 
\Perf(\widetilde{S(D_l)}_k) \simeq \Perf(\widetilde{S(D)}_k).  
$$
This concludes the proof of part $(2)$ of Proposition \ref{prop: sodvnc}, which immediately implies part $(1)$. 
 \end{proof}

It will be useful to reformulate Proposition \ref{prop: sodvnc} in a way that is closer 
to the analogous statements in the simple normal crossing setting given in \cite[Theorem 4.9]{bergh2016geometricity} and \cite[Proposition 3.12]{SST2}. We do this in Corollary \ref{sodncver2} below. 
This amounts  to breaking down the factors 
$\cA_{ \chi}^k  \simeq \Perf(\widetilde{S(D)}_k)$ into direct sums: since $\widetilde{S(D)}_k$ is the disjoint union of the normalizations of the $k$-codimensional strata, we have a decomposition 
$$
\cA_{ \chi}^k  \simeq \Perf(\widetilde{S(D)}_k) \simeq \bigoplus_{S \in \cS(D)_k} \Perf(\widetilde{S})
$$
Before stating this result we need to introduce some notations. We equip the set of strata $\cS(D)$ with the   coarsest preorder  
 satisfying the following two properties: let 
 $S$ and $S'$ be in $\cS(D)$
\begin{enumerate}
\item if   $S \subseteq S'$ then $S \leq S'$, and
\item if $\mathrm{dim}(S)=\mathrm{dim}(S')$ then $S \leq S'$ and $S'\leq S$.
\end{enumerate} 
We denote $\cS(D)^* := \cS(D) - \{X\}$.  For every $S \in \cS(D)$, if $\mathrm{cod}(S)$ is the codimension of $S$ we set  
$|S|:= \mathrm{cod}(S)$.
We denote by $(\mathbb{Z}_{\cS(D),r}, \leq)$ the   set 
$ 
\coprod_{S \in \cS(D)} \mathbb{Z}_{|S|, r}^*
$ equipped with the following preorder: let 
 $\xi, \xi'$ be in $\mathbb{Z}_{\cS{D},r}$, with $\xi \in \mathbb{Z}_{|S|, r}^*$ and 
$\xi \in \mathbb{Z}_{|S'|, r}^*$, then  $\xi < \xi'$  
\begin{itemize}
\item if $|S| > |S'|$,
\item or if $|S| = |S'|$ and $S \neq S'$,
\item or if $S=S'$ and $\xi <_{\mathbb{Z}_{|S|, r}^*} \xi'$.
\end{itemize}
Corollary \ref{sodncver2} generalizes \cite[Theorem 4.9]{bergh2016geometricity} and \cite[Proposition 3.12]{SST2} to pairs $(X, D)$ where $D$ is normal crossing but necessarily simple.  

\begin{corollary}
\label{sodncver2}
 \mbox{ }
\begin{enumerate}[leftmargin=*] 
\item The category $\Perf(\radice[r]{(X,D)})$ has a  psod
 of type $(\cS(D), \leq)$,  
  $\, 
\Perf(\radice[r]{(X,D)}) = \langle \cA_S, S \in \cS(D) \rangle 
\,$ 
where: 
\begin{itemize}[leftmargin=*]
\item $\cA_{X}\simeq \Perf(X)$.
\item  For all $S \in  \cS(D)^*$, the subcategory $\cA_S$  has a psod of type $(\mathbb{Z}_{|S|, r}^*, \leq)$ 
$$
\cA_S = \langle  \cA_{ \chi}^S, \chi \in (\mathbb{Z}_{|S|,r}^*, \leq)  \rangle
$$ 
and, for all $\chi \in \mathbb{Z}_{|S|, r}^*$, there is an equivalence $\cA_{ \chi}^S \simeq \Perf(\widetilde{S})$. 
\end{itemize}
\item The  category $\Perf(\radice[r]{(X,D)})$ has a psod of type $(\mathbb{Z}_{\cS(D),r}, \leq)$ 
$$\Perf(\radice[  r]{(X,D)})=  \langle  \cA_\chi^S, \chi \in  (\mathbb{Z}_{\cS(D),r}, \leq)  \rangle,$$
where $\cA^{X} \simeq \Perf(X)$ and for all $\chi \in \mathbb{Z}_{|S|, r}^*$ there is an equivalence $\cA_{ \chi}^S \simeq \Perf(\widetilde{S}).$
\end{enumerate}
\end{corollary}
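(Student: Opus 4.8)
The plan is to deduce Corollary~\ref{sodncver2} from Proposition~\ref{prop: sodvnc} by a formal refinement of the psod, the only genuinely geometric input being the identification of the normalization of the codimension-$k$ skeleton with the disjoint union of the normalizations of the codimension-$k$ strata, $\widetilde{S(D)}_k = \coprod_{S \in \cS(D)_k}\widetilde{S}$. This yields an orthogonal direct sum decomposition $\Perf(\widetilde{S(D)}_k) \simeq \bigoplus_{S \in \cS(D)_k}\Perf(\widetilde{S})$ into admissible summands (each being a direct factor, with its projection serving as both adjoints). To prove part~(2), I would start from the psod of type $(\mathbb{Z}_{D,r}, \leq)$ furnished by Proposition~\ref{prop: sodvnc}(2), whose factor indexed by $\chi \in \mathbb{Z}_{k,r}^*$ is $\cA_\chi^k \simeq \Perf(\widetilde{S(D)}_k)$ for $k \geq 1$, and replace each such factor by the family of summands $\{\cA_\chi^S\}_{S \in \cS(D)_k}$, $\cA_\chi^S \simeq \Perf(\widetilde S)$. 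This is an instance of a general principle: if $\cC = \langle \cC_p, p \in P\rangle$ is a psod and a factor $\cC_{p_0}$ is an orthogonal direct sum of admissible subcategories, then splitting off those summands gives a psod of $\cC$, with indexing preorder obtained from $P$ by substituting a complete block (cf.\ Remark~\ref{discdisc}) for $p_0$; admissibility passes to the new factors by composing admissible embeddings, the new semi-orthogonality relations hold by construction, and generation is unchanged. Performing this simultaneously over all $(k,\chi)$ produces a psod of $\Perf(\radice[r]{(X,D)})$ indexed by $\coprod_{S \in \cS(D)}\mathbb{Z}_{|S|,r}^* = \mathbb{Z}_{\cS(D),r}$ with the preorder pulled back along $(S,\chi) \mapsto (|S|,\chi)$, which one checks agrees with the preorder $\leq$ of the statement; together with $\cA_\chi^S \simeq \Perf(\widetilde S)$ this is part~(2).

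Part~(1) then follows by the reverse operation. For $S \in \cS(D)^*$ set $\cA_S := \langle \cA_\chi^S, \chi \in \mathbb{Z}_{|S|,r}^*\rangle$ and $\cA_X := \cA_0 \simeq \Perf(X)$. Restricting the preorder of part~(2) to the indices with a fixed $S$ recovers $(\mathbb{Z}_{|S|,r}^*, \leq)$, so $\cA_S$ carries the stated sub-psod, and $\cA_S$ is admissible as the span of finitely many mutually semi-orthogonal admissible subcategories. What must still be checked is that $\{\cA_S : S \in \cS(D)\}$ is a psod of type $(\cS(D),\leq)$: that $\cA_S \subseteq \cA_{S'}^\bot$ whenever $S <_{\cS(D)} S'$, and that $\langle \cA_S, \cA_{S'}\rangle \simeq \cA_S \oplus \cA_{S'}$ whenever $S, S'$ are equivalent, i.e.\ of equal codimension. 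Both reduce to the single claim that $\cA_\chi^S \perp \cA_{\chi'}^{S'}$ for all characters $\chi, \chi'$ whenever $S \neq S'$.

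This orthogonality is the only non-formal point, and it is the step I expect to require the most care. It is not forced by the psod of Proposition~\ref{prop: sodvnc} (for incomparable $\chi, \chi'$ the factors $\cA_\chi^k$, $\cA_{\chi'}^k$ need not be orthogonal), but it holds for the same geometric reason as in the simple normal crossing case: $\cA_\chi^S$ is the image under $\iota_* q^*$ of the $\widetilde S$-supported, $\chi$-isotypic summand of $\Perf(G_{k,r}(D))$, and for distinct strata $S \neq S'$ the supports meet only along strata of strictly larger codimension, along which the characters $\chi$ and $\chi'$---being non-trivial on the full sets of normal directions of $S$ and of $S'$ respectively---lie in distinct isotypic components of the ambient gerbe, so the relevant Hom-complexes are acyclic. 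This is precisely the mechanism underlying the preorder in \cite[Theorem~4.9]{bergh2016geometricity} and \cite[Proposition~3.12]{SST2}, which I would cite directly. The remaining verification that the pulled-back preorder on $\mathbb{Z}_{\cS(D),r}$ matches the one in the statement, as well as the analogous check for $(\cS(D),\leq)$---using that every codimension-$k$ stratum closure is contained in some codimension-$(k-1)$ one, so that the ``coarsest preorder'' of the statement is the preorder by codimension---is elementary bookkeeping that I would leave to the reader.
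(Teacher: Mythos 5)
Your overall route coincides with the paper's: Corollary \ref{sodncver2} is deduced from Proposition \ref{prop: sodvnc} by splitting each factor $\cA_\chi^k \simeq \Perf(\widetilde{S(D)}_k)$ along $\widetilde{S(D)}_k = \coprod_{S \in \cS(D)_k}\widetilde{S}$ (the paper records only this splitting and presents the corollary as a reformulation). You are in fact more careful than the text in noticing that the stated preorders demand more than this formal splitting, namely mutual orthogonality of $\cA_\chi^S$ and $\cA_{\chi'}^{S'}$ for \emph{distinct strata of equal codimension and arbitrary nonzero characters}, which does not follow from Proposition \ref{prop: sodvnc} alone.

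Two points in your write-up need repair. First, part (2) as you argue it is internally inconsistent: the preorder obtained by substituting complete blocks into $(\mathbb{Z}_{D,r},\leq)$ does \emph{not} agree with $(\mathbb{Z}_{\cS(D),r},\leq)$, since the latter relates $(S,\chi)$ and $(S',\chi')$ in both directions for \emph{all} characters whenever $|S|=|S'|$ and $S\neq S'$, including incomparable or oppositely ordered $\chi,\chi'$; so the ``non-formal point'' you defer to part (1) is already needed to obtain part (2) with the stated indexing preorder. Relatedly, the reduction to ``$\cA_\chi^S \perp \cA_{\chi'}^{S'}$ for all $S\neq S'$'' overshoots: for strata of different codimension only the one direction of orthogonality imposed by the preorder is required (and in general only that one holds, as it already follows from Proposition \ref{prop: sodvnc}); the genuinely new claim concerns equal codimension. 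Second, you cannot cite \cite[Theorem 4.9]{bergh2016geometricity} or \cite[Proposition 3.12]{SST2} ``directly'' for this orthogonality, as those are simple normal crossing statements. The clean fix is a descent reduction along the \'etale cover used in the proof of Proposition \ref{prop: sodvnc}: Hom-complexes in $\Perf(\radice[r]{(X,D)})$ are computed as limits of Hom-complexes over the stacks $\radice[r]{(U_l,D_l)}$ via (\ref{limitofcat}) (cf.\ Remark \ref{remsemifact}), the pullback functors send the factor attached to $(S,\chi)$ into the sum of the factors attached to the strata of $D_l$ lying over $S$ (which are distinct, equal-codimension strata for distinct $S, S'$), and the snc-case per-stratum orthogonality then gives vanishing levelwise, hence in the limit. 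With these adjustments your argument does establish the corollary, and indeed supplies a justification that the paper itself leaves implicit.
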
 

\begin{example}
It might be useful to describe  the sod  
given by Corollary \ref{sodncver2} in a concrete example. Let $X=\mathbb{A}^2$ and let $D$ be an irreducible nodal cubic curve with node at the origin $o \in \mathbb{A}^2$. The stratification of $\mathbb{A}^2$ induced by $D$ has three strata: $o$, $D$ and 
$\mathbb{A}^2$. The normalization of $\widetilde{D}$ of $D$ is isomorphic to $\mathbb{A}^1$. The preorder $(\mathbb{Z}_{\cS(D),2},  \leq)$  is 
given by
$ \, 
o < D < \mathbb{A}^2
$.  
Then Corollary \ref{sodncver2} yields a sod of the form 
$$
\Perf(\sqrt[2]{(\mathbb{A}^2, D)}) = \langle \, \Perf(o), 
\Perf(\widetilde{D}), \Perf(\mathbb{A}^2) \, \rangle. 
$$
\end{example}

 \section{Applications to logarithmic geometry}
 In this last section we explain two consequences of our results in the context of logarithmic geometry and the theory of parabolic sheaves. In section \ref{secirsgenncd} we construct an infinite sod on the category of perfect complexes over the \emph{infinite root stack} $\radice[\infty]{(X,D)}$ of a pair $(X, D)$ where $D$ is a general normal crossing divisor. Equivalently, this can be expressed by saying that we construct an infinite sod on the derived category of parabolic sheaves with rational weights on $(X, D)$. This improves earlier results that we obtained in  \cite{SST2}. Then in section \ref{kfkoncd} we show that this implies   a generalization to the general normal crossings case of an important structure result in Kummer flat K-theory originally due to Hagihara and Nizio\l.

\subsection{Infinite root stacks of  normal crossing divisors}
\label{secirsgenncd}
Before proceeding it is useful to recall our results  from \cite{SST2}, and explain in which  way our current results improve on them. 
\begin{enumerate}
\item In  \cite{SST2} we construct  psod-s on $\Perf(\radice[\infty]{(X,D)})$, under the assumption that $D$ is simple normal crossings. 
\item In \cite{SST2} we also constructed psod-s on $\Perf(\radice[\infty]{(X,D)})$ in the general normal crossing case. This however required, first of all, to work over a field of characteristic zero, and secondly involved using 
 a highly non-trivial result on the  invariance of $\Perf(\radice[\infty]{(X,D)})$ under log blow-ups which was established in \cite{scherotzke2016logarithmic}. In particular the sod obtained in this way depended on a choice of a simple normal crossing model $(\widetilde{X}, \widetilde{D})$ obtained from $(X,D)$ by successive blow-ups along the strata of $D$. 
\end{enumerate}

 \begin{remark}
It is also important to note that the psod-s in the general normal crossing case constructed in \cite{SST2} only exist for the infinite root stack, and not for finite root stacks. The argument in \cite{SST2} 
 requires   to switch to a simple normal crossing birational model $(\widetilde{X}, \widetilde{D})$  of $(X, D)$. The key point is that the categories of perfect complexes of  $\radice[\infty]{(\widetilde{X}, \widetilde{D})}$  and $\radice[\infty]{(X,D)}$ are equivalent by the main theorem of \cite{scherotzke2016logarithmic} but this is far from true for finite root stacks. Thus the psod-s on  finite root stacks constructed in  Proposition \ref{prop: sodvnc} are entirely new, even over a field of characteristic 0. 
\end{remark}

Let $(X,D)$ be a  pair given by an algebraic stack equipped with a normal crossing divisor.  Our main goal in this section is to use the results of Section \ref{sec:sodros} to construct psod-s (patterned after Corollary 
\ref{sodncver2}) on $\Perf(\radice[\infty]{(X,D)})$ in the general normal crossing case, which are independent of the ground ring and of the choice of a 
desingularization $(\widetilde{X}, \widetilde{D})$.  
 This is given by Theorem \ref{mainsgncdivinf} below. 
The proof strategy is  the same as the one that was used to prove Theorem 3.16 of \cite{SST2}, whose statement exactly parallels   Theorem \ref{mainsgncdivinf}:    
the only difference is that Theorem 3.16 of \cite{SST2}
assumes that $D$ is simple normal crossings. 
For this reason we will limit ourselves to state our results, referring the reader to \cite{SST2} for the proof. 

Formulating Theorem \ref{mainsgncdivinf} requires introducing some notations. Let $S$ be a stratum in $\cS(D)$. We denote 
 $$
(\mathbb{Q}/\mathbb{Z})_{|S|} := \bigoplus_{i=1}^{|S|} \mathbb{Q}/\mathbb{Z}, \quad 
(\mathbb{Q}/\mathbb{Z})_{|S|}^*:=  \bigoplus_{i=1}^{|S|} \Big ( \mathbb{Q}/\mathbb{Z}- \{0\} \Big ).  
$$
There is a natural identification 
$(\mathbb{Q}/\mathbb{Z})_{|S|} = \mathbb{Q}^{|S|} \cap (-1,0]^ {|S|}$. We will equip $(\mathbb{Q}/\mathbb{Z})_{|S|}$ with a  total order, which we denote $\leq^ !$,  that differs from the restriction of the usual ordering on the rational numbers. 

First of all we define the order $\leq^!$ on $\bZ_{n!}$ recursively, as follows.
\begin{itemize}
\item On $\bZ_{2!}=\{-\frac{1}{2},0\}$ we set $-\frac{1}{2}<^! 0$.
\item Having defined $\leq^!$ on $\bZ_{(n-1)!}$, let us consider the natural short exact sequence
$$
0\to \bZ_{(n-1)!}\to \bZ_{n!}\xrightarrow{\pi_n} \bZ_n\to 0,
$$
where $\bZ_n=\{-\frac{n-1}{n},\hdots, -\frac{1}{n},0\}$ is equipped with the standard order $\leq$ described above. Given two elements $a,b\in \bZ_{n!}$, we set $a\leq^! b$ if either $\pi_n(a)<\pi_n(b)$, or $\pi_n(a)=\pi_n(b)$ and $a \leq^! b$ in $\bZ_{(n-1)!}$, where we are identifying the fiber $\pi_n^{-1}(\pi_n(a))\subseteq \bZ_{n!}$ with $\bZ_{(n-1)!}$ in the canonical manner.
\end{itemize}
We identify $\bZ_{n!}$ with a subset of $\mathbb{Q} \cap (-1, 0]$ as explained in  (\ref{eqpreorder1}) above. 
Note that we can write every element $\chi$ in $\mathbb{Q}^{|S|} \cap (-1,0]^ {|S|}$ as 
$$
\chi = \left(-\frac{p_1}{n!}, \ldots, -\frac{p_N}{n!}\right)
$$
 for some $p_1, \ldots, p_N$ in $\mathbb{N}$ where $n \in \mathbb{N}.$ This expression is unique if we require $n$ to be as small as possible, and we call this the
\emph{normal factorial form}.  
\begin{definition}
\label{defpartord!}
Let 
$$
\chi = \left(-\frac{p_1}{n!}, \ldots, -\frac{p_N}{n!}\right), \quad \chi'= \left(-\frac{q_1}{m!}, \ldots, -\frac{q_N}{m!}\right) \in \mathbb{Q}^  {|S|} \cap (-1,0]^ {|S|}
$$ be in normal factorial form. 
We write $\chi \leq^! \chi'$ if:
\begin{itemize}
\item $n > m$, or
\item $n=m$ and $-\frac{p_i}{n!} \leq^! -\frac{q_i}{n!}$ in $\bZ_{n!}$ for all $i = 1, \hdots, N$, where $\leq^!$ is the ordering defined above. 
\end{itemize}
\end{definition}
\begin{theorem}
\label{mainsgncdivinf}
 \mbox{ }
\begin{enumerate}[leftmargin=*] 
\item The category $\Perf(\radice[\infty]{(X,D)})$ has a  psod
 of type $(\cS(D), \leq)$
  $$ 
\Perf(\radice[\infty]{(X,D)}) = \langle \cA_S, S \in \cS(D) \rangle,
$$
where: 
\begin{itemize}[leftmargin=*]
\item $\cA_{X}\simeq \Perf(X)$.
\item  For all $S \in  \cS(D)^*$, the subcategory $\cA_S$  has a psod of type $((\mathbb{Q}/\mathbb{Z})_{|S|}^*, \leq^!)$ 
$$
\cA_S = \langle  \cA_{ \chi}^S, \chi \in ((\mathbb{Q}/\mathbb{Z})_{|S|}^*, \leq^!)  \rangle
$$ 
and, for all $\chi \in \mathbb{Z}_{|S|, r}^*$, there is an equivalence $\cA_{ \chi}^S \simeq \Perf(\widetilde{S})$. 
\end{itemize}
\item The  category $\Perf(\radice[\infty]{(X,D)})$ has a psod of type $(
(\mathbb{Q}/\mathbb{Z})_{\cS(D)}, \leq^!)$ 
$$\Perf(\radice[\infty]{(X,D)})=  \langle  \cA_\chi^S, \chi \in  ((\mathbb{Q}/\mathbb{Z})_{\cS(D)}, \leq^!)  \rangle,$$
where $\cA^{X} \simeq \Perf(X)$ and for all $\chi \in (\mathbb{Q}/\mathbb{Z})_{|S|}^*$ there is an equivalence $\cA_{ \chi}^S \simeq \Perf(\widetilde{S}).$
\end{enumerate}
\end{theorem}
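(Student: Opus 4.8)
The plan is to deduce the statement from the finite-level psod-s of Corollary~\ref{sodncver2} by passing to the limit over $r$, following the argument that establishes \cite[Theorem 3.16]{SST2} in the simple normal crossing case; the only new ingredient is Corollary~\ref{sodncver2} itself, which now supplies the required psod on $\Perf(\radice[r]{(X,D)})$ without the simplicity hypothesis, so that the construction of nested sod-s and their colimit carried out in \cite{SST2} applies \emph{mutatis mutandis}. For this reason we only indicate the main steps.

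First I would recall from \cite{TV} (see also \cite{SST2}) that for $r\mid r'$ the projection $\radice[r']{(X,D)}\to\radice[r]{(X,D)}$ induces a fully faithful pullback functor, and that along the cofinal system $\{n!\}_{n\in\mathbb{N}}$ one has $\Perf(\radice[\infty]{(X,D)})\simeq\varinjlim_{n}\Perf(\radice[n!]{(X,D)})$. For each $n$ I would equip $\Perf(\radice[n!]{(X,D)})$ with the psod of Corollary~\ref{sodncver2}, re-indexed by replacing the product order on each stratum's factor $\mathbb{Z}^*_{|S|,n!}$ with the order induced by $\leq^!$ on $\bZ_{n!}$: the point of the recursive definition of $\leq^!$ (Definition~\ref{defpartord!}) is precisely that $\bZ_{(n-1)!}=\pi_n^{-1}(0)$ is a final (upward-closed) segment of $(\bZ_{n!},\leq^!)$, so that $\mathbb{Z}_{\cS(D),(n-1)!}$ sits inside $\mathbb{Z}_{\cS(D),n!}$ as a sub-preorder, the inclusions form a direct system, and $\varinjlim_n(\mathbb{Z}_{\cS(D),n!},\leq^!)=((\mathbb{Q}/\mathbb{Z})_{\cS(D)},\leq^!)$. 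Next I would check that the pullback functors are \emph{ordered} for these inclusions: a pullback carries the generating object of~(\ref{semifactorseq}) at level $n!$ to the corresponding one at the next level, hence maps the factor $\cA^S_\chi$ equivalently onto the factor of the same index, and the required Beck--Chevalley isomorphisms are the flat and proper base-change equivalences along the \'etale root-stack transition maps, exactly as in the proof of Proposition~\ref{prop: sodvnc}. Passing to the colimit, the factor attached to $\chi\in(\mathbb{Q}/\mathbb{Z})^*_{|S|}$ is the colimit over those $n$ with $\chi\in\mathbb{Z}^*_{|S|,n!}$ of the $\cA^S_\chi$; since these are all equivalent to $\Perf(\widetilde S)$ and the transitions restrict to equivalences between them, the colimit stabilises and $\cA^S_\chi\simeq\Perf(\widetilde S)$, while $\cA_X$ is the stable colimit of the $\Perf(X)$'s. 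This yields part (2), and part (1) follows by grouping the factors by stratum, just as Corollary~\ref{sodncver2}(1) follows from Corollary~\ref{sodncver2}(2).

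The hard part is twofold. First, a filtered colimit of categories carrying psod-s need not itself carry a psod, so Theorem~\ref{limitpsod2} (which concerns limits) does not apply directly: generation of the colimit must instead be extracted from the specific nested structure --- each $\Perf(\radice[n!]{(X,D)})$ being an admissible subcategory of the next whose psod restricts the larger one along a final segment of indices --- by an inductive mutation argument parallel to the one in the proof of Theorem~\ref{limitpsod2}. Second, one must verify that re-ordering by $\leq^!$ at each finite level still produces a psod, that is, that every orthogonality relation forced by $\leq^!$ is already among those recorded by Corollary~\ref{sodncver2}; this is the compatibility that the recursive definition of $\leq^!$ is tailored to guarantee. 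Both points are carried out in detail in \cite{SST2}, to which we refer the reader.
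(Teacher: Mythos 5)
Your proposal takes essentially the same route as the paper: the paper offers no independent argument for Theorem \ref{mainsgncdivinf}, stating only that the proof is identical to that of Theorem 3.16 of \cite{SST2} once Corollary \ref{sodncver2} supplies the finite-level psod-s in the general normal crossing case, which is precisely your strategy of re-indexing by $\leq^!$ and passing to the colimit along the system $\{n!\}$. One small imprecision (inside the part you defer to \cite{SST2} anyway): the transition maps $\radice[(n+1)!]{(X,D)}\to\radice[n!]{(X,D)}$ are flat but not \'etale, so the compatibilities of the semi-orthogonal factors with pullback should be justified as in \cite{SST2} rather than by \'etale base change.
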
 

\subsection{Kummer flat K-theory of normal crossing divisors}
\label{kfkoncd}
The 
psod-s on infinite root stacks that we constructed in \cite{SST2} are a categorification of structure theorems in Kummer flat K-theory of \emph{simple} normal crossing divisors  due to Hagihara and Nizio\l   \, \cite{hagihara}, \cite{Ni1}. Our techniques  allowed us to extend those structure theorems to a wider class of log stacks, including the case of general normal crossing divisors, but only over a field of characteristic zero and via a desingularization step. 

In this section we explain how the results obtained in Section \ref{secirsgenncd} yield unconditional structure theorem for Kummer flat K-theory of general normal crossing divisors. As in \cite{SST2}, we will formulate our statements more generally  in terms of \emph{noncommutative motives} and \emph{additive invariants} of dg-categories. 


\begin{definition}
An \emph{additive invariant} of dg-categories is a functor of $\infty$-categories 
$$
\mathrm{H}\colon \dgCat \longrightarrow \cP 
$$
where $\cP$ is a stable and presentable 
$\infty$-category, satisfying the following properties:
\begin{enumerate} 
\item $\mathrm{H}$ preserves zero-objects.
\item $\mathrm{H}$ sends split exact sequences of dg-categories to cofiber sequences in $\cP$.
\item $\mathrm{H}$ preserves filtered colimits.
\end{enumerate}
\end{definition}
Hochschild homology, algebraic K-theory and non-connective K-theory are all examples of additive invariants. 
As proved in \cite{tabuada2008}, \cite{BGT} (see also \cite{robalo2015k} and \cite{HSS}) there exists a \emph{universal} additive invariant
$$
\cU\colon \dgCat \longrightarrow \mathrm{Mot}.
$$
The target category of the universal additive invariant, $\mathrm{Mot}$, is called the category of \emph{additive noncommutative motives}. 
If $X$ is a  stack, we denote $\cU(\Perf(X))$ simply by $\cU(X)$. 

The following Corollary extends to the general normal crossing case Corollary 5.6 and 5.8 of \cite{SST2}; its second half extends to the general normal crossing case  Theorem 1.1 of \cite{Ni1}.  
\begin{corollary}
\label{ncmotdirectsum}
Let $(X, D)$ be a log stack given by an algebraic stack $X$ equipped with a normal crossing divisor 
$D.$ 
\begin{itemize}[leftmargin=*]
\item 
There is an equivalence 
\begin{equation}
\label{directdirsum}
\cU(\Perf(\radice[\infty]{(X,D)})) \simeq \cU(X) \bigoplus \Big ( \bigoplus_{S \in \cS(D)^*} \Big ( \bigoplus_{\chi \in (\mathbb{Q}/\mathbb{Z})_S^*} \cU(\widetilde{S}) \Big ) \Big ).  
\end{equation}
Since $\cU$ is universal, every additive invariant $\mathrm{H}(\Perf(\radice[\infty]{(X,D)}))$ decomposes as a direct sum patterned after (\ref{directdirsum}).  
\item Denote by $K_{\mathrm{Kfl}}(X, D)$  the Kummer flat K-theory of $(X,D)$. Then  there is a direct sum decomposition of spectra 
$$ 
K_{\mathrm{Kfl}}(X, D) \simeq K(X) \bigoplus \Big ( \bigoplus_{S \in S_D^*} \Big ( \bigoplus_{\chi \in (\mathbb{Q}/\mathbb{Z})_S^*} K(\widetilde S) \Big ) \Big ).
$$
\end{itemize}
\end{corollary}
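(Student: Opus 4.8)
The plan is to deduce both statements from the semi-orthogonal decompositions of Section \ref{sec:sodros}: one evaluates them on an arbitrary additive invariant and then passes to the limit over the root index. The first bullet will follow from the general formalism, and the second from it together with the comparison of \cite{TV} between the Kummer flat site of $(X,D)$ and its infinite root stack.

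First I would treat a fixed finite root stack $\radice[r]{(X,D)}$. By Corollary \ref{sodncver2} it carries a psod of type $(\mathbb{Z}_{\cS(D),r}, \leq)$ with \emph{finitely many} factors, namely $\Perf(X)$ together with one copy of $\Perf(\widetilde S)$ for every $S \in \cS(D)^*$ and every $\chi \in \mathbb{Z}_{|S|,r}^*$. A finite psod is built from iterated split exact sequences of dg-categories, and by definition an additive invariant $\mathrm{H}\colon \dgCat \to \cP$ turns a split exact sequence into a split cofiber sequence; a short induction on the number of factors then yields a natural equivalence
\[
\mathrm{H}\big(\Perf(\radice[r]{(X,D)})\big) \simeq \mathrm{H}(X) \oplus \bigoplus_{S \in \cS(D)^*} \bigoplus_{\chi \in \mathbb{Z}_{|S|,r}^*} \mathrm{H}(\widetilde S),
\]
in which the summand indexed by $(S,\chi)$ is the image of the composite functor \eqref{semifactorseq}.

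Next I would pass to $r = \infty$. Since $\radice[\infty]{(X,D)} = \varprojlim_r \radice[r]{(X,D)}$ along the divisibility poset, one has $\Perf(\radice[\infty]{(X,D)}) \simeq \varinjlim_r \Perf(\radice[r]{(X,D)})$ with respect to the pull-back functors. As recalled in Section \ref{secirsgenncd} (in parallel with \cite[Theorem 3.16]{SST2}), the psod-s of Corollary \ref{sodncver2} can be arranged to be \emph{nested}: for $r \mid r'$ the pull-back sends the $(S,\chi)$-factor at level $r$ into the $(S,\chi)$-factor at level $r'$ under the standard inclusion $\mathbb{Z}_{|S|,r}^* \hookrightarrow \mathbb{Z}_{|S|,r'}^*$. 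Hence the equivalences of the previous step are compatible with the transition maps; since $\mathrm{H}$ preserves filtered colimits and a filtered colimit of direct sums is the direct sum of the colimits, taking $\varinjlim_r$ and using $\varinjlim_r \mathbb{Z}_{|S|,r}^* = (\mathbb{Q}/\mathbb{Z})_{|S|}^*$ produces
\[
\mathrm{H}\big(\Perf(\radice[\infty]{(X,D)})\big) \simeq \mathrm{H}(X) \oplus \bigoplus_{S \in \cS(D)^*} \bigoplus_{\chi \in (\mathbb{Q}/\mathbb{Z})_{|S|}^*} \mathrm{H}(\widetilde S).
\]
Specializing to $\mathrm{H} = \cU$ gives the first displayed equivalence of the Corollary, and universality of $\cU$ propagates the decomposition to every additive invariant; alternatively one may read this off directly from Theorem \ref{mainsgncdivinf}, the argument being the same. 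For the second bullet I would invoke the main comparison of \cite{TV}: the Kummer flat topos of $(X,D)$ is equivalent to the flat topos of $\radice[\infty]{(X,D)}$, and finitely presented sheaves match under this equivalence, so $K_{\mathrm{Kfl}}(X,D) \simeq K(\radice[\infty]{(X,D)})$ (cf. \cite[Section 5]{SST2} in the simple normal crossing case). Since non-connective K-theory is an additive invariant, taking $\mathrm{H} = K$ above gives $K_{\mathrm{Kfl}}(X,D) \simeq K(X) \oplus \bigoplus_{S \in \cS(D)^*} \bigoplus_{\chi \in (\mathbb{Q}/\mathbb{Z})_{|S|}^*} K(\widetilde S)$, as asserted.

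I expect the main obstacle to be the compatibility required to commute the colimit over $r$ with the semi-orthogonal decompositions: one needs not merely that each $\Perf(\radice[r]{(X,D)})$ has the psod of Corollary \ref{sodncver2}, but that these fit into a coherent nested system along the divisibility poset, so that the factor $\cA_\chi^S$ at level $\infty$ really is the colimit of its finite-level avatars. This is exactly what the construction of Section \ref{secirsgenncd} (following \cite[Theorem 3.16]{SST2}) is designed to provide; granting it, the rest is a formal manipulation with additive invariants and filtered colimits.
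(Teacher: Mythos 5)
Your proposal is correct and follows essentially the same route as the paper: the paper's proof simply invokes the argument of \cite[Corollary 5.6]{SST2}, which is exactly your scheme of evaluating an additive invariant on the nested psod-s of the finite root stacks (split exact sequences giving direct summands) and passing to the filtered colimit over $r$, together with the comparison of \cite{TV} identifying $K_{\mathrm{Kfl}}(X,D)$ with $K(\Perf(\radice[\infty]{(X,D)}))$ for the second bullet. The compatibility issue you flag (nestedness of the decompositions along the divisibility poset) is precisely what Section \ref{secirsgenncdivinf-placeholder} is not needed for beyond what Theorem \ref{mainsgncdivinf} and the construction recalled from \cite{SST2} already supply, so no gap remains.
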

\begin{proof}
The first part of the Corollary is proved exactly as Corollary 5.6 from \cite{SST2}. As explained in Section 2.1.4 of \cite{SST2}, it follows from \cite{TV} that the Kummer flat K-theory of $(X,D)$ coincides with the algebraic K-theory of   $\Perf(\radice[\infty]{(X,D)})$. Since K-theory is an additive invariant, the second half of the Corollary follows from the first. This concludes the proof. \end{proof}

\begin{remark}
In addition to Kummer flat K-theory one can define Kummer flat versions of all additive invariants, such as Hochschild homology, in the following way. 
If $X$ is a log scheme, let $\Perf(X_{\mathrm{Kfl}})$ be the dg-category of perfect complexes over the Kummer flat topos of $X$. Then for every additive invariant H, we set $\mathrm{H}_{\mathrm{Kfl}}(X):=\mathrm{H}(\Perf(X_{\mathrm{Kfl}}))$. Corollary \ref{ncmotdirectsum}  implies 
that, if the log structure on $X$ is given by a normal crossing divisor, 
$\mathrm{H}_{\mathrm{Kfl}}(X)$ also decomposes as a direct sum patterned after (\ref{directdirsum}). 
\end{remark}

\begin{remark}
\label{Kumetale}
Corollary \ref{ncmotdirectsum} has an analogue for the \emph{Kummer \'etale} topos of $(X,D)$: this, in particular, extends the second part of the statement of Theorem 1.1 of \cite{Ni1} and the Main Theorem of \cite{hagihara} to the general normal crossing setting. In characteristic zero there is no difference so this comment is relevant only if $\kappa$ has positive or mixed characteristic, and, assuming that $D$ is equicharacteristic as in  \cite{Ni1}, $\bQ/\bZ$ has to be replaced by $(\bQ/\bZ)'=\bZ_{(p)}/\bZ$ (where $p$ is the characteristic over which $D$ lives) in the formulas above. The key  observation is that, if 
$X_{\mathrm{K\acute{e}t}}$ is the Kummer \'etale topos, then $\Perf(X_{\mathrm{K\acute{e}t}})$   is equivalent to perfect complexes over a \emph{restricted version} of the infinite root stack $\radice[\infty']{(X,D)}$, where we take the inverse limit only of root stacks $\radice[r]{(X,D)}$ such that $p$ does not divide $r$. Then $\Perf(\radice[\infty']{(X,D)})$ carries a psod which analogous to the one given by Theorem \ref{mainsgncdivinf}, except we work everywhere with indices which are coprime to $p$. We leave the details to the interested reader.
\end{remark}

\bibliographystyle{plain}
\bibliography{biblio}

\end{document}